\title{Hydra group doubles are not residually finite}
\author{Kristen Pueschel}
\theoremstyle{plain}
\newtheorem*{thma}{Theorem}
\newtheorem*{clma}{Claim}
\newtheorem{thm}{Theorem}[section]
\newtheorem{prop}[thm]{Proposition}
\newtheorem{lemma}[thm]{Lemma}
\newtheorem{corollary}[thm]{Corollary}
\theoremstyle{definition}
\newtheorem{definition}{Definition}
\theoremstyle{remark}
\newtheorem{remark}{Remark}
\begin{document}
\maketitle
\begin{abstract} In 2013, Kharlampovich, Myasnikov, and Sapir constructed the first examples of finitely presented residually finite groups with large Dehn functions. Given any recursive function $f$, they produce a finitely presented residually finite group with Dehn function dominating $f$. There are no known elementary examples of finitely presented residually finite groups with super-exponential Dehn function. Dison and Riley's hydra groups can be used to construct a sequence of groups for which the Dehn function of the $k ^{\mbox{\tiny{th}}}$ group is equivalent to the $k ^{\mbox{\tiny{th}}}$ Ackermann function. Kharlampovich, Myasnikov, and Sapir asked whether or not these groups are residually finite. We show that these constructions do not produce residually finite groups.

Classification: 20E26, 20E06
Keywords: residual finiteness, hydra groups, Dehn function, separable subgroup
\end{abstract}

\begin{section}{Introduction}

The first examples of finitely presented residually finite groups with super-exponential Dehn function were constructed in \cite{KMS}:

\begin{thma}[{Kharlampovich, Myasnikov, and Sapir}] For any recursive function ${f: \mathbb{N} \to \mathbb{N}}$, there is a finitely presented residually finite solvable group $G$ of derived length 3 for which the Dehn function ${\delta_G \succcurlyeq f}$.
\end{thma}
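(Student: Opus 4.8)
The plan is to realize the growth of $f$ not through a geometric doubling --- which, as the present paper shows for the hydra construction, tends to destroy residual finiteness --- but through the \emph{subgroup distortion} inside a carefully chosen solvable group whose residual finiteness can be verified algebraically. Throughout I would keep in mind the two competing demands: the Dehn function must dominate $f$, yet the group must remain finitely presented, solvable of derived length $3$, and residually finite.

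First I would build the ``computational engine.'' Given the recursive $f$, I would construct a finitely generated metabelian group $M$ (derived length $2$) containing a finitely generated abelian subgroup $A$ whose distortion function $\mathrm{dist}_A^M$ dominates $f$. The idea is to let the abelian quotient $M/M'$ act on the abelian normal subgroup $M'$, viewed as a module over the group ring of $M/M'$, so that the module relations encode the steps of the recursion computing $f$; elements that are short in $M$ but require $\sim f(n)$ letters to express inside $A$ then arise because collapsing them exhausts the full depth of the simulated computation. This step carries essentially all of the computability content, and I expect it to be where the recursion-theoretic bookkeeping is heaviest. Finite presentability forces the module relations to be finitely generated, which one arranges by letting $M/M'$ act through finitely many matrices.

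Next I would amplify distortion into area. Here one attaches $M$ to itself, or to an auxiliary abelian group, along $A$ by a \emph{solvable} (rather than free) extension, producing $G$ with $G'' \ne 1 = G'''$, so that $G$ has derived length exactly $3$. The standard lower bound principle ---  distortion of an identified subgroup forces Dehn-function growth, the same mechanism underlying the Dison--Riley hydra groups --- then yields $\delta_G \succcurlyeq \mathrm{dist}_A^M \succcurlyeq f$.

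The hard part, and the whole reason the construction is delicate, is residual finiteness, since the amalgamation/extension used to amplify distortion is exactly the kind of move that introduces non-residually-finite quotients. I would avoid this by exhibiting $G$ inside an iterated wreath product of abelian groups and invoking Gruenberg's theorem: if $U$ is residually finite and $V$ is abelian and residually finite, then $V \wr U$ is residually finite. Consequently a group of $\mathbb{Z} \wr (\mathbb{Z} \wr \mathbb{Z})$ type is residually finite and of derived length $3$. Concretely I would use the Magnus embedding to place the free solvable pieces of $G$ into such a wreath product and check that the distorting relations survive the embedding; failing a direct embedding, the fallback is to build $G$ as a finitely generated \emph{linear} solvable group over a commutative ring and appeal to Mal'cev's theorem that finitely generated linear groups are residually finite. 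In either route the genuine burden is to confirm that the very relations producing the distortion do not force a quotient outside the residually finite solvable world.
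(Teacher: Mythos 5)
First, a point of order: the paper does not prove this theorem at all --- it is quoted from Kharlampovich, Myasnikov, and Sapir \cite{KMS} as motivation for the present work, so there is no in-paper argument to compare yours against. Judged on its own, your outline names the right tensions but leaves the two genuinely hard steps as placeholders. The first gap is the ``amplification'' step. The standard mechanism by which distortion of a subgroup $A$ forces a large Dehn function is adjoining a stable letter (or a second factor) centralizing $A$ --- that is, an HNN extension or amalgam, exactly the doubling move you set out to avoid because it leaves the solvable class. You propose instead a ``solvable extension along $A$,'' but no such general device exists: an extension with solvable kernel or quotient does not by itself produce the $A \times \mathbb{Z}$ structure (or the corridor arguments) underlying the lower bound, and if you force a new element to commute with all of $A$ inside a solvable group you have changed the group in a way that must be re-analyzed from scratch. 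Without a concrete replacement, the inequality $\delta_G \succcurlyeq \mathrm{dist}^M_A$ is unsupported.

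The second gap is residual finiteness, which you correctly identify as the crux but then defer. Gruenberg's theorem does make $\mathbb{Z}\wr(\mathbb{Z}\wr\mathbb{Z})$ residually finite, and subgroups inherit residual finiteness; but your $G$ is not a subgroup of a free solvable group --- it is a quotient by the very ``distorting relations'' you introduced, and residual finiteness does not pass to quotients. The Magnus embedding applies to free solvable groups, not to these finitely presented quotients. The linear fallback is also doubtful: by Mal'cev and Tits, finitely generated linear solvable groups are virtually nilpotent-by-abelian, and it is far from clear that such a group can have Dehn function dominating an arbitrary recursive $f$. There is a further unexamined claim at the first stage: finite presentability of metabelian groups is tightly constrained (Bieri--Strebel), and realizing arbitrary recursive distortion of an abelian subgroup inside a \emph{finitely presented} metabelian group is itself a theorem-level assertion, not bookkeeping. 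The actual construction of Kharlampovich, Myasnikov, and Sapir meets these difficulties by encoding machine computations directly into a purpose-built solvable group of derived length $3$ and verifying residual finiteness by hand through explicit finite quotients; none of that work is present in your sketch.
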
 

Their examples are sufficiently complicated that it remains interesting to find elementary examples that arise `in nature'. One place to look is among known elementary examples of groups with large Dehn function. In \cite{Tim}, Dison and Riley introduced the hydra groups $$G_k := \langle a_1, \dots, a_k, t~|~ a_1^t= a_1, a_i^t=a_ia_{i-1},~ i > 1 \rangle,$$ where we use the conventions $a^b:= b^{-1}ab$ and $[a,b]=a^{-1}b^{-1}ab$. They proved that the HNN extension $$\Gamma_k = \langle G_k, p~|~[a_it,p]=1,~ \forall ~ 1\leq i \leq k \rangle$$ over the subgroup $H_k=\langle a_1t,\dots, a_kt\rangle$ has Dehn function equivalent to the Ackermann function $A_k(n)$.

In \cite{KMS}, the authors commented that it was unknown whether or not $\Gamma_k$ is residually finite for all $k>1$, but that they expected $\Gamma_k$ would not be residually finite for $k>1$. We confirm this.

\begin{thm}\label{notrf1} For all $k>1$, the group $\Gamma_k$  is not residually finite.
\end{thm}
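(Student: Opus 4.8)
The plan is to recognize $\Gamma_k$ as an amalgamated product and to trade residual finiteness of $\Gamma_k$ for separability of $H_k$ in $G_k$, which I would then refute using the Ackermannian distortion of $H_k$ from \cite{Tim}.

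First I would rewrite the presentation. Since the stable letter $p$ commutes with every generator $a_it$ of $H_k$, the relations say exactly that $p$ centralizes $H_k$, so $\Gamma_k\cong G_k\ast_{H_k}\bigl(H_k\times\langle p\rangle\bigr)$, the amalgam of $G_k$ and $H_k\times\langle p\rangle$ over $H_k$. The first technical point is to compute the centralizer of $p$ inside $G_k$: I claim $C_{G_k}(p)=H_k$. Indeed, if $g\in G_k\setminus H_k$, then in the amalgam the word $g^{-1}p^{-1}gp$ has syllables lying alternately in $G_k\setminus H_k$ and in $(H_k\times\langle p\rangle)\setminus H_k$, so it is reduced of length $4$ and therefore nontrivial; thus $[g,p]\neq 1$ precisely when $g\notin H_k$.

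Next I would carry out the reduction. Suppose, toward a contradiction, that $\Gamma_k$ is residually finite, and fix any $g\in G_k\setminus H_k$. By the previous step $[g,p]\neq 1$, so there is a finite quotient $\phi\colon\Gamma_k\to Q$ with $\phi([g,p])\neq 1$, that is, $\phi(g)\notin C_Q(\phi(p))$. Because $\phi(p)$ still commutes with $\phi(H_k)$, we have $\phi(H_k)\subseteq C_Q(\phi(p))$, and hence $\phi(g)\notin\phi(H_k)$. Restricting $\phi$ to $G_k$ then produces a homomorphism from $G_k$ to a finite group separating $g$ from $H_k$. As $g$ was arbitrary, residual finiteness of $\Gamma_k$ would force $H_k$ to be a separable subgroup of $G_k$. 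So it suffices to prove that $H_k$ is \emph{not} separable in $G_k$ when $k>1$.

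The heart of the matter, and what I expect to be the main obstacle, is therefore to exhibit an element $g\in G_k\setminus H_k$ lying in the profinite closure $\overline{H_k}$, and this is where the hydra must be used. The idea is that in any finite quotient $\phi$ of $G_k$ the image $\phi(t)$ has finite order $d$, so the hydra endomorphism $a_i\mapsto a_ia_{i-1}$ becomes conjugation by $\phi(t)$, an inner automorphism of order dividing $d$; playing this collapse off against the Ackermannian distortion of $H_k$ in $G_k$ established in \cite{Tim} should force the image of a suitably chosen distorted element into $\phi(H_k)$ in every finite quotient, even though the element itself lies outside $H_k$. Care is needed in the choice of witness: naive candidates such as $a_1$ fail, since, for instance, the homomorphism $G_2\to S_3\times\mathbb{Z}/3$ sending $a_1,a_2,t$ to $((123),1),\,((12),1),\,((132),z)$ respects the defining relations yet carries $H_2$ into the proper subgroup $\langle(13)\rangle\times\langle z\rangle$, which does not contain $\phi(a_1)$, thereby separating $a_1$ from $H_2$. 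The witness must instead be built directly from the fast-growing hydra words, so that no finite quotient, whose bounded size cannot reflect Ackermannian growth, can certify that it has left $H_k$; making this last step precise is the crux of the argument.
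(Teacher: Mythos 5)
Your reduction is sound and is essentially the paper's: the identification $\Gamma_k\cong G_k\ast_{H_k}\bigl(H_k\times\langle p\rangle\bigr)$, the normal-form computation showing $[g,p]\neq 1$ exactly when $g\in G_k\setminus H_k$, and the conclusion that residual finiteness of $\Gamma_k$ forces $H_k$ to be separable in $G_k$ all check out (this is the paper's Lemma~\ref{7}, proved there in contrapositive form). The difficulty is that you have only reduced Theorem~\ref{notrf1} to the statement that $H_k$ is not separable in $G_k$, and for that statement you offer a strategy rather than a proof, and the strategy rests on a principle that is false in general. Large distortion of a subgroup does not imply non-separability: the center of the integral Heisenberg group is quadratically distorted yet separable, since every subgroup of a finitely generated nilpotent group is closed in the profinite topology, and nothing in the heuristic ``a finite quotient cannot see Ackermannian growth'' distinguishes the hydra pair from such examples. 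The paper's own Remark on the case $\sum_i [w_c]_i r_i=0$ underlines how delicate this is even within the hydra family: whether $H_3(0,1,0)$ is separable in $G_3$ is left open there, although the ambient distortion phenomena are the same. So the crux of the theorem is genuinely missing from your proposal, and it is not clear the sketched route can be made to work at all.

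What the paper actually does for the missing step is not distortion-theoretic. It constructs an explicit surjective endomorphism $\phi$ of $G_2$ (an automorphism because $G_2$ is free-by-cyclic, hence residually finite and Hopfian) sending $H_2=\langle a_1t,a_2t\rangle$ to $\langle a_2^{-1},\,ta_2^{-1}t^{-2}a_2\rangle$, and recognizes the resulting pair as the Burns--Karrass--Solitar pair $H_{BKS}=\langle \alpha^{-1},\,y\alpha^{-1}y^{-2}\alpha\rangle\leq\langle \alpha,\beta,y\mid \alpha^y=\alpha\beta,\ \beta^y=\beta\rangle$, a classical example of a finitely generated non-separable subgroup of a free-by-cyclic group. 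Non-separability is then promoted from $(G_2,H_2)$ to $(G_k,H_k)$ via the Dison--Riley computation $G_2\cap H_k=H_2$ together with the elementary fact (Lemma~\ref{simp}) that the intersection of a subgroup with a separable subgroup is separable in that subgroup. To repair your outline you would need to replace the distortion heuristic by an explicit witness $g\in G_2\setminus H_2$ together with a proof that every finite quotient of $G_2$ carries $g$ into the image of $H_2$; the identification with the Burns--Karrass--Solitar example is precisely the known mechanism for producing such a witness. Your side example of a map $G_2\to S_3\times\mathbb{Z}/3$ separating $a_1$ from $H_2$ is a correct and worthwhile sanity check, but it only shows that the witness must be chosen with care, not how to choose it.
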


The free product with amalgamation
$$\Gamma^{\prime}_k= \langle G_k, \overline{G_k} ~|~ a_it=\overline{a_it},~~1\leq i \leq k \rangle$$ also enjoys a fast growing Dehn function. An analogous theorem holds for these groups:

\begin{thm}\label{notrf}
	For all $k>1$, the group $\Gamma^{\prime}_k$ is not residually finite.
\end{thm}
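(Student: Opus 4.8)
The plan is to run an argument parallel to the one for $\Gamma_k$, first recasting $\Gamma'_k$ as the amalgam $G_k *_{H_k} \overline{G_k}$ over the free group $H_k$ (Dison and Riley's $H_k$ is free of rank $k$, and $t\notin H_k$), and then exhibiting a single nontrivial element that is killed by every finite quotient. The natural candidate comes from the amalgamating relations themselves: the identity $a_1t=\overline{a_1}\,\overline{t}$ forces $s:=\overline{t}\,t^{-1}=\overline{a_1}^{-1}a_1$, and since $t\notin H_k$ (so $\overline t\notin \overline{H_k}$ and $t^{-1}\in G_k\setminus H_k$) the word $s=\overline{t}\cdot t^{-1}$ is reduced of length two in the amalgam, hence $s\neq 1$ by the normal form theorem. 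The whole game is then to show $\pi(s)=1$ for every homomorphism $\pi$ onto a finite group.

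First I would extract, \emph{using only the relations $a_1^t=a_1$ and $a_2^t=a_2a_1$} (so the argument is uniform for all $k>1$, and visibly breaks at $k=1$), a self-similar recursion for $s$. Writing $\sigma_j:=t^{-j}s\,t^{j}$, a short computation gives three auxiliary identities in $\Gamma'_k$: the fixed copy $\overline{a_1}^{\overline t}=\overline{a_1}$ yields $[s,a_1t]=1$ and hence $a_1^{-1}\sigma_j a_1=\sigma_{j-1}$ (so conjugation by $a_1$ shifts the index exactly as $t$ does), while $a_2^t=a_2a_1$ yields $(a_2t)^{-1}\,t\,(a_2t)=a_1^{-1}t$. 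Combining these, conjugation by the single element $\gamma:=a_2t\in H_k$ doubles the index:
\[
(a_2t)^{-1}\,\sigma_j\,(a_2t)=\sigma_{2j}\,\sigma_{2j-1},\qquad \sigma_j:=t^{-j}s\,t^{j}.
\]
Iterating, $(a_2t)^{-n}\,s\,(a_2t)^{n}=\sigma_0\sigma_{-1}\sigma_{-2}\cdots\sigma_{-(2^n-1)}$, a product of $2^n$ consecutive conjugates of $s$ — precisely the Ackermann-style length explosion that drives the Dehn function, now turned against residual finiteness.

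The hard part will be the finite-quotient endgame, and I expect it to be the sole genuine obstacle. In any finite quotient the image of $a_2t$ has some order $d$, so conjugation by $(a_2t)^{d}$ is trivial and the iterate above collapses to the single relation $\pi(s)=\prod_{i=0}^{2^{d}-1}\pi(\sigma_{-i})$, a nontrivial constraint saying $\pi(s)$ equals a balanced product of exponentially many of its own conjugates; varying $d$ to $md$ gives arbitrarily long such expressions. The goal is to show these self-similarity constraints (together with the shift relations $\pi(t)^{-1}\pi(\sigma_j)\pi(t)=\pi(\sigma_{j+1})$ and $\pi(a_1t)$ centralizing the $\pi(\sigma_j)$) force $\pi(s)=1$ in every finite group. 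I would caution that a naive counting/abelianization argument is \emph{not} enough here: a pure doubling map admits nonzero solutions over $\mathbb{Z}/(2^{d}-1)$, so one must feed in the finer relations inherited from the amalgam (and the residual finiteness of $G_k$ itself) rather than treating the $\sigma_j$ as free. This is exactly the point at which the double is harder than the HNN extension: for $\Gamma_k=G_k*_{H_k}(H_k\times\langle p\rangle)$ the stable letter $p$ centralizes all of $H_k$, so non-separability of $H_k$ in $G_k$ instantly kills a commutator $[g,p]$; but in the symmetric double there is \emph{no} element centralizing $H_k$ (for $k\ge 2$, $H_k$ is its own centralizer), so "$\pi(g)\in\pi(H_k)$" alone does not suffice — the two copies of $G_k$ may witness that membership by different elements, and one is forced to exploit the explicit hydra self-similarity of $s$ directly. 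Once $\pi(s)=1$ is established in all finite quotients, $s$ is a nontrivial element lying in every finite-index normal subgroup, so $\Gamma'_k$ is not residually finite.
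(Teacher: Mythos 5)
There is a genuine gap: the step you yourself flag as ``the sole genuine obstacle'' --- showing that the self-similarity constraints force $\pi(s)=1$ in every finite group --- is never carried out, so the argument does not close. Worse, the choice of candidate element is unjustified: by the very diagonal reasoning below, $s=\overline{t}\,t^{-1}$ dies in all finite quotients provided $t$ cannot be separated from $H_k$ in finite quotients of $G_k$, and nothing in your write-up (or in the paper) establishes that $t$ is such an inseparable element. The element the paper actually uses comes from the Burns--Karrass--Solitar pair, via an automorphism of $G_2$ identifying $(G_2,H_2)$ with $(G_{BKS},H_{BKS})$; it is a specific commutator, not $t$.

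The deeper issue is that your premise --- that for the double, ``$\pi(g)\in\pi(H_k)$ alone does not suffice'' because the two copies of $G_k$ might witness membership by different elements --- is exactly wrong, and correcting it dissolves the whole difficulty. This is the paper's Lemma on doubles: if $g\in G_k\setminus H_k$ satisfies $\phi(g)\in\phi(H_k)$ for \emph{every} finite quotient $\phi$ of $G_k$, and $\Phi:\Gamma'_k\to Q$ is a finite quotient restricting to $\psi$ and $\phi$ on the two copies (which agree on $H_k$ by the amalgamation), then one applies the inseparability hypothesis not to $\psi$ or $\phi$ separately but to the single finite quotient $\Psi=(\psi,\phi):G_k\to Q\times Q$. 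Since $\Psi(H_k)$ lies in the diagonal $\Delta$ and $\Psi(g)\in\Psi(H_k)\subset\Delta$, the two witnesses are forced to coincide: $\psi(g)=\phi(g)$, hence $\Phi(g^{-1}\overline{g})=1$ while $g^{-1}\overline{g}\neq 1$ by the normal form theorem. So non-separability of $H_k$ in $G_k$ (which the paper establishes via BKS and an intersection argument reducing $k$ to $2$) is all that is needed, and the hydra self-similarity of $s$ plays no role. Your recursion $(a_2t)^{-1}\sigma_j(a_2t)=\sigma_{2j}\sigma_{2j-1}$ may be correct as a computation, but it is a detour into a problem you cannot finish and that the product-quotient trick renders unnecessary.
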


\begin{definition} The subgroup $H \leq G$ is separable in $G$ if for all $g \not\in H$, there is a finite quotient $\phi: G \to Q$ such that $\phi(g) \not \in \phi(H)$. Equivalently, $H \leq G$ is separable if and only if it is closed in the profinite topology of $G$, which means that $$H= \bigcap_{\substack{H<K<G \\ \left[G:K\right] < \infty}}K.$$
\end{definition}

In the next section we will see that separability of the subgroup $H_k$ in $G_k$ is necessary for the residual finiteness of $\Gamma_k$ and $\Gamma^{\prime}_k$. Therefore, Theorems \ref{notrf1} and \ref{notrf} are proven via:
\begin{lemma}\label{notsep}
The group $H_k$ is not separable in $G_k$ for any $k>1$.
\end{lemma}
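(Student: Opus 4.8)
My plan is to show directly that $H_k$ is not closed in the profinite topology on $G_k$ by exhibiting a single element $g\in G_k\smallsetminus H_k$ with $\phi(g)\in\phi(H_k)$ for \emph{every} finite quotient $\phi\colon G_k\to Q$. I will work with the Dison--Riley basis $s_i:=a_it$ of the free group $H_k$ and record the identities forced by the relations. Writing $\theta$ for conjugation by $t$, so that $\theta(a_i)=a_ia_{i-1}$, one has $s_is_j^{-1}=a_ia_j^{-1}\in H_k$, the nesting $a_{i-1}=[a_i,t]$ (hence $a_1=[a_k,t,\dots,t]$ with $k-1$ entries $t$), and, because $a_1$ commutes with $t$, the clean rules $s_1^{\,n}x\,s_1^{-n}=a_1^{\,n}\theta^{-n}(x)a_1^{-n}$ for $x\in F_k:=\langle a_1,\dots,a_k\rangle$ and $s_1^{\,m}=a_1^{\,m}t^{\,m}$. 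In particular $H_k\cap F_k$ is free on the $s_1$-conjugates $s_1^{\,n}(s_js_1^{-1})s_1^{-n}$ ($j\ge 2$, $n\in\mathbb{Z}$), which gives a concrete normal form against which candidate witnesses can be tested for membership.

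The mechanism I expect to drive everything is the tension between finiteness of $Q$ and the fact that $\theta$ is unipotent of infinite order. Fix a finite quotient and let $m$ be the order of $\phi(t)$. Since $\phi(t)^m=1$, conjugation by $\phi(t)^m$ is trivial, so $\phi\circ\theta^{m}=\phi$ on $F_k$; thus the infinite-order automorphism $\theta$ descends to a \emph{finite-order} automorphism $\bar\theta$ of the finite group $\phi(F_k)$, and $\bar\theta$ is unipotent on each lower central quotient (on the abelianization it is $\bar a_i\mapsto\bar a_i+\bar a_{i-1}$, while the nesting $a_{i-1}=[a_i,t]$ drives $\phi(a_1)$ into the $k$-th lower central term). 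The intended payoff is that a $\theta$-orbit which is infinite in $G_k$ and leaves $H_k$ must collapse to a period dividing $m$ in $Q$, and this forced periodicity is what can pin an element outside $H_k$ into $\phi(H_k)$.

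To produce $g$ I would start from the $H_k$-elements $s_1^{\,n}(s_js_1^{-1})s_1^{-n}=a_1^{\,n}\theta^{-n}(a_ja_1^{-1})a_1^{-n}$ together with $s_1^{\,m}=a_1^{\,m}t^{\,m}$, and seek an integer combination whose dependence on the quotient-specific exponent $m$ cancels once $\phi(t)^m=1$ is imposed, leaving a fixed word $g$; I would then confirm $g\notin H_k$ by reducing it against the free basis of $H_k\cap F_k$ above. The hard part --- and the place where the hydra/unipotent structure is essential rather than cosmetic --- is making this cancellation \emph{uniform over all finite quotients simultaneously}, including the non-nilpotent ones. Naive single-orbit guesses fail: the homomorphism $G_2\to S_3$ sending $a_1\mapsto(132)$, $a_2\mapsto(12)$, $t\mapsto(123)$ (one checks the relations, e.g. $t^{-1}a_2t=(13)=a_2a_1$) has $\phi(s_1)=e$ and $\phi(s_2)=(23)$, so $\phi(H_2)=\{e,(23)\}$, and this already separates both $a_1\mapsto(132)$ and $a_2\mapsto(12)$ from $H_2$.

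Consequently the verification cannot pretend that $\phi(F_k)$ is nilpotent; it must use that $\bar\theta$ has finite order while acting unipotently on every lower central quotient, and it must combine the contributions of several conjugates $s_1^{\,n}(s_js_1^{-1})s_1^{-n}$ so that a fixed product is genuinely independent of $m$ and lands in $\phi(H_k)$ for each $Q$. Establishing that one fixed combination lies simultaneously in every $\phi(H_k)$ is, in my estimate, the whole technical weight of the lemma; the reductions, the explicit identities, and the principle $\phi\circ\theta^{m}=\phi$ are routine by comparison. The hypothesis $k>1$ enters exactly here: for $k=1$ the automorphism $\theta$ is trivial, $G_1\cong\mathbb{Z}^2$ is LERF, and no such collapse can occur.
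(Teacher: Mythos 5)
This is a research plan, not a proof: the two steps that carry all the weight --- producing the explicit witness $g\in G_k\smallsetminus H_k$ and verifying that $\phi(g)\in\phi(H_k)$ for every finite quotient $\phi$ --- are both left open, and you say so yourself (``Establishing that one fixed combination lies simultaneously in every $\phi(H_k)$ is \dots the whole technical weight of the lemma''). The preparatory material you do supply (the basis $s_i=a_it$, the formula $s_1^{\,n}(s_js_1^{-1})s_1^{-n}=a_1^{\,n}\theta^{-n}(a_ja_1^{-1})a_1^{-n}$, the observation that $\theta$ descends to a finite-order unipotent-on-graded-pieces automorphism of $\phi(F_k)$, and the $S_3$ counterexample ruling out naive candidates) is correct but does not by itself force any element of the complement of $H_k$ into every $\phi(H_k)$; the ``forced periodicity'' heuristic gives no mechanism for choosing a combination that works uniformly over non-nilpotent quotients, which is exactly where your own analysis stalls. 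As it stands, nothing in the proposal rules out the possibility that $H_k$ is separable, so the lemma is not established.

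For comparison, the paper avoids constructing a witness in $G_k$ at all. It first reduces to $k=2$: the intersection of a subgroup with a separable subgroup is separable (an elementary profinite-topology fact), and Dison--Riley's piece-decomposition criterion for membership in $H_k$ gives $G_2\cap H_k=H_2$, so non-separability of $H_2$ in $G_2$ implies non-separability of $H_k$ in $G_k$. For $k=2$ it exhibits an explicit endomorphism $\phi$ of $G_2$ ($a_2\mapsto a_2^{-2}ta_2$, $t\mapsto a_1t^{-1}$, $a_1\mapsto a_1$), checks surjectivity, and invokes Hopficity of the free-by-cyclic group $G_2$ to conclude $\phi$ is an automorphism carrying $(G_2,H_2)$ onto the Burns--Karrass--Solitar pair $(G_{BKS},H_{BKS})$, whose non-separability is a known theorem. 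The BKS-style argument (reproduced in Section 4 for the generalized hydra groups) does produce a concrete witness, namely the commutator $[t^{-1},a_2^{-2}t^{-1}a_2^{2}]$, but the verification that it lies in every finite-index overgroup $L\supseteq H$ proceeds quite differently from your outline: one rewrites $G_2$ as an HNN extension over $\mathcal{T}=\langle t_k\mid [t_k,t_{k+1}]=1\rangle$, uses the pigeonhole principle on the finitely many cosets of $\mathrm{core}(L)\cap\mathcal{T}$ to find relations $t_nt_0^{-1}\in\mathrm{core}(L)$, shows $N=(H\cap\mathcal{T})(\mathrm{core}(L)\cap\mathcal{T})$ is normal in $\mathcal{T}$ with abelian quotient, and concludes the commutator lies in $N\leq L$. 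If you want to salvage your direct approach, that is the kind of structural argument you would need to reinvent; the unipotence of $\theta$ alone will not do it.
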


In particular, we will show that the non-separability of $H_2$ in $G_2$ implies non-separability of $H_k$ in $G_k$. To see that $H_2$ is not a separable subgroup of $G_2$, we recognize $(G_2,H_2)$ as isomorphic to an important group-subgroup pair $(G_{BKS}, H_{BKS})$ studied by Burns, Karrass, and Solitar in \cite{BKS}. Burns, Karrass, and Solitar proved that $H_{BKS}$ is a non-separable subgroup of $G_{BKS}$. The group $G_{BKS}$ was the first example of a 3-manifold group containing a finitely presented non-separable subgroup \cite{BKS} and it has been an important tool for verifying other examples of non subgroup-separable groups. For example, Niblo and Wise showed that $G_{BKS}$ virtually embeds in the fundamental group of the complement of the link of 4 circles, $L$. Therefore, $L$ is not subgroup separable \cite{NibWise}. Further, they showed that the fundamental groups of compact graph manifolds have only one obstruction to subgroup separability: the existence of an embedding of $L$ (and hence a virtual embedding of $G_{BKS}$). Niblo and Wise have also shown that $G_{BKS}$ contains finitely presented subgroups which are contained in no proper finite-index subgroups. That is, there is a proper subgroup $K$ such that finite quotients of $G_{BKS}$ will not witness that $K$ is a proper subgroup.\cite{NibWise2}.\\

Dison and Riley have constructed variations on their group-subgroup pairs that also have large distortion. These too can be used to produce candidates for elementary examples of finitely presented groups with fast-growing Dehn functions that might be residually finite. For ${\mathbf{w}= (w_1, \dots, w_k)},$ where $w_i$ is a positive word on letters in ${\{a_1, \dots, a_{i-1}\}}$, consider the group $${G_k(\mathbf{w})= \langle a_1,\dots, a_k ~|~ a_i^t=a_iw_i, ~1\leq i \leq k \rangle}.$$ For powers ${\mathbf{r}= (r_1, \dots, r_k)}$, where ${r_i \geq 0}$, consider the subgroup $${H_k(\mathbf{r}) = \langle a_1t^{r_1}, \dots, a_kt^{r_k}\rangle}.$$ We prove that these cannot be used to produce residually finite groups with large Dehn function. In particular:
\begin{thm}\label{genHydra} $H_k$ is a separable subgroup of $G_k(\mathbf{w})$ if and only if $\mathbf{w}= (1, \dots, 1)$. Therefore the HNN extension $\Gamma_k(\mathbf{w}) = \langle G_k(\mathbf{w}), p~|~h^p=h, h \in H_k\rangle$ is residually finite only if $G_k(\textbf{w})= F_k \times \mathbb{Z}$.
\end{thm}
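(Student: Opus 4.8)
The plan is to prove the separability biconditional first and then read off the statement about $\Gamma_k(\mathbf{w})$. For the easy (``if'') direction, observe that when $\mathbf{w}=(1,\dots,1)$ every relation reads $a_i^t=a_i$, so $G_k(\mathbf{w})=F_k\times\mathbb{Z}$ with $F_k=\langle a_1,\dots,a_k\rangle$ and $\mathbb{Z}=\langle t\rangle$. Writing elements as $(g,n)$ and letting $\sigma\colon F_k\to\mathbb{Z}$ be the exponent-sum homomorphism, each generator $a_it=(a_i,1)$ gives $H_k=\{(g,\sigma(g)):g\in F_k\}=\ker\phi$, where $\phi(g,n)=\sigma(g)-n$ is a homomorphism onto $\mathbb{Z}$. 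Since $\mathbb{Z}$ is residually finite, $\ker\phi$ is an intersection of finite-index subgroups, so $H_k$ is separable.

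For the main (``only if'') direction I would argue the contrapositive: if $\mathbf{w}\neq(1,\dots,1)$ then $H_k$ is not separable. Since $w_1$ is a positive word on the empty alphabet, $w_1=1$, so there is a least index $j\ge 2$ with $w_j\neq 1$. By minimality every $a_i$ with $i<j$ commutes with $t$, hence the positive word $w_j$ in $a_1,\dots,a_{j-1}$ also satisfies $[w_j,t]=1$. As $a_j$ does not occur in $w_j$, the generators $a_j$ and $w_j$ lie in distinct free factors of $F_k=\langle a_j\rangle\ast\langle a_i:i\neq j\rangle$, so $N:=\langle a_j,w_j\rangle\cong F_2$ is free on $\{a_j,w_j\}$. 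The automorphism $\psi$ induced by conjugation by $t$ (given by $\psi(a_i)=a_iw_i$) fixes $w_j$ and sends $a_j\mapsto a_jw_j$, hence preserves $N$; therefore $R:=\langle w_j,a_j,t\rangle=N\rtimes_{\psi}\langle t\rangle$ is isomorphic to $G_2$ via $a_1\leftrightarrow w_j$, $a_2\leftrightarrow a_j$, $t\leftrightarrow t$.

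Next I would locate $H_k\cap R$. Because each $a_i$ with $i<j$ commutes with $t$, the product $(a_{m_1}t)\cdots(a_{m_r}t)$ collapses to $w_jt^{\,r}$, where $w_j=a_{m_1}\cdots a_{m_r}$ and $r=|w_j|$; together with $a_jt$ this gives $\langle a_jt,\,w_jt^{\,r}\rangle\le H_k\cap R$, which under the isomorphism is the subgroup $\langle a_2t,\,a_1t^{\,r}\rangle\le G_2$. A normal-form computation in $F_k\rtimes\mathbb{Z}$ should show this inclusion is an equality. The crux is then to prove that $\langle a_1t^{\,r},a_2t\rangle$ is \emph{not} separable in $G_2$. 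Here I would run the Burns--Karrass--Solitar mechanism directly: in any finite quotient $q$ with $m=\operatorname{ord}(q(t))$, the relation $a_j^{t^m}=a_jw_j^{\,m}$ forces $q(w_j)^m=1$, the exact unipotent phenomenon $q(a_1)^m=1$ underlying their example, and one adapts their witness element to the twisted powers $t^{\,r}$ occurring here.

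Finally, separability passes to subgroups: for $x\in R\smallsetminus(H_k\cap R)=R\smallsetminus H_k$, a finite quotient of $G_k(\mathbf{w})$ separating $x$ from $H_k$ restricts to one of $R$ separating $x$ from $H_k\cap R$. So if $H_k$ were separable in $G_k(\mathbf{w})$, then $H_k\cap R$ would be separable in $R\cong G_2$, contradicting the previous paragraph; hence $H_k$ is non-separable whenever $\mathbf{w}\neq(1,\dots,1)$. The statement about $\Gamma_k(\mathbf{w})$ is then immediate: by the necessity result of the next section, residual finiteness of the HNN extension $\Gamma_k(\mathbf{w})$ forces $H_k$ to be separable in $G_k(\mathbf{w})$, which by the biconditional forces $\mathbf{w}=(1,\dots,1)$, i.e. $G_k(\mathbf{w})=F_k\times\mathbb{Z}$. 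I expect the main obstacle to be the power-$r$ version of Burns--Karrass--Solitar non-separability together with the verification that $H_k\cap R$ is no larger than $\langle a_jt,\,w_jt^{\,r}\rangle$; note that the case $|w_j|=1$ (in particular the original hydra groups, where $w_2=a_1$) sidesteps this entirely and reduces directly to the known pair $(G_2,H_2)$.
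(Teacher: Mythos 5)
Your proposal follows essentially the same route as the paper: locate the copy $R=\langle w_c,a_c,t\rangle\cong G_2$ inside $G_k(\mathbf{w})$ at the first nontrivial $w_c$, identify $H_k\cap R$ with $H_2(|w_c|,1)=\langle a_1t^{|w_c|},a_2t\rangle$, and conclude via the fact that intersecting a separable subgroup with $R$ preserves separability, so everything reduces to the non-separability of $H_2(r,1)$ in $G_2$ --- which is exactly the paper's Proposition on $H_2(r,1)$, proved there by a careful adaptation of Burns--Karrass--Solitar (a witness word shown to lie outside $H_2(r,0)$ via the piece decomposition, plus the normality argument for $N=(H\cap\mathcal{T})(\mathrm{core}(L)\cap\mathcal{T})$), the step you correctly flag as the main obstacle but only sketch. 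Your ``if'' direction, realizing $H_k$ as the kernel of a homomorphism onto $\mathbb{Z}$, is a clean (and arguably more self-contained) substitute for the paper's appeal to subgroup separability of $F_k\times\mathbb{Z}$.
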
 

\begin{thm}\label{mostgenHydra} Suppose that $\mathbf{w}=(1,\dots, 1, w_c, \dots, w_k)$ where $w_c \neq 1$ and $\mathbf{r}= (r_1, \dots, r_k)$. Let $[w_c]_i$ denote the index of $a_i$ in $w_c$. If $$\sum_{i=1}^{c-1}[w_c]_i r_i \neq 0 $$ then $H_k(\mathbf{r}) $ is a non-separable subgroup of $G_k(\mathbf{w})$.
 \end{thm}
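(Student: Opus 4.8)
The plan is to realize $G_k(\mathbf{w})$ as the mapping torus $F_k \rtimes_\psi \mathbb{Z}$, where $\psi$ is the triangular automorphism $a_i \mapsto a_i w_i$, and then to locate inside it an isomorphic copy of the Burns--Karrass--Solitar pair $(G_2,H_2)$ that already carries a witness to non-separability. Since $\mathbf{w} = (1,\dots,1,w_c,\dots,w_k)$, the automorphism $\psi$ fixes $a_1,\dots,a_{c-1}$ pointwise and sends $a_c \mapsto a_c w_c$; in particular $w_c \in \langle a_1,\dots,a_{c-1}\rangle$ is $\psi$-fixed. Two membership facts drive the argument. First, $x_c := a_c t^{r_c} \in H_k(\mathbf{r})$ by definition. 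Second, because $a_1,\dots,a_{c-1}$ commute with $t$, substituting $x_i = a_i t^{r_i}$ for $a_i$ into the positive word $w_c$ and collecting the (central) powers of $t$ gives $w_c(x_1,\dots,x_{c-1}) = w_c \cdot t^{R} \in H_k(\mathbf{r})$, where $R = \sum_{i<c}[w_c]_i r_i$.

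The key structural observation is that $\langle w_c, a_c\rangle$ is a rank-two free, $\psi$-invariant subgroup of $F_k$: it is free of rank two because $w_c$ and $a_c$ lie in complementary free factors of $F_k = \langle a_1,\dots,a_{c-1}\rangle * \langle a_c \rangle * \langle a_{c+1},\dots,a_k\rangle$, and it is $\psi$-invariant because $\psi(w_c)=w_c$ and $\psi(a_c)=a_c w_c$. Hence $L := \langle w_c, a_c, t\rangle$ is a subgroup of $G_k(\mathbf{w})$, and the assignment $a_1 \mapsto w_c$, $a_2 \mapsto a_c$, $t \mapsto t$ identifies $L$ with the hydra/BKS group $G_2 = \langle a_1,a_2,t \mid a_1^t = a_1,\ a_2^t = a_2 a_1\rangle$. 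Under this identification the subgroup $S := \langle w_c t^{R},\, a_c t^{r_c}\rangle$, which the previous paragraph shows lies in $H_k(\mathbf{r}) \cap L$, corresponds exactly to the generalized subgroup $\langle a_1 t^{R},\, a_2 t^{r_c}\rangle$. The hypothesis $R \neq 0$ is precisely the condition under which this $c=2$ pair fails to be separable; invoking that base case produces an element $g \in \overline{S}^{\,L}\setminus S$, that is, $g$ lies in every finite-index subgroup of $L$ containing $S$ but $g \notin S$.

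It remains to push this witness up into $G_k(\mathbf{w})$ and to check that it escapes all of $H_k(\mathbf{r})$. For the first point I would use two elementary, purely formal facts about profinite closures: closure computed inside a subgroup is contained in closure computed in the ambient group (if $U \le G_k(\mathbf{w})$ has finite index and contains $S$, then $U \cap L$ has finite index in $L$ and contains $S$, so $g \in U \cap L \subseteq U$), and closure is monotone in the subgroup. Combining these with $S \subseteq H_k(\mathbf{r})$ gives $g \in \overline{S}^{\,L} \subseteq \overline{S}^{\,G_k(\mathbf{w})} \subseteq \overline{H_k(\mathbf{r})}^{\,G_k(\mathbf{w})}$, so $g$ cannot be separated from $H_k(\mathbf{r})$. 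It then only remains to verify $g \notin H_k(\mathbf{r})$; since $g \in L$, it suffices to show $H_k(\mathbf{r}) \cap L = S$, which I would establish using the $t$-exponent homomorphism $\tau\colon G_k(\mathbf{w}) \to \mathbb{Z}$ together with the free-product structure of $F_k$ to compare $H_k(\mathbf{r}) \cap F_k$ with $\langle w_c, a_c\rangle$.

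I expect the genuine work to sit in two places. The first is the base case ($c=2$, $w_2 = a_1$, exponents $(R, r_c)$ with $R \neq 0$): proving that $\langle a_1 t^{R}, a_2 t^{r_c}\rangle$ is non-separable in $G_2$ is the BKS-type heart of the matter, requiring a finite-quotient computation that exploits the unipotent monodromy $t^{-n} a_2 t^{n} = a_2 a_1^{n}$ and the resulting constraint $q(a_1)^{\operatorname{ord} q(t)} = 1$ valid in any finite quotient $q$, with $R \neq 0$ serving to force the relevant $t$-exponents to align so that the witness is captured by the image of $S$. The second is the intersection computation $H_k(\mathbf{r}) \cap L = S$ needed to confirm $g \notin H_k(\mathbf{r})$. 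Everything else is formal manipulation of the semidirect-product normal form and of profinite closures.
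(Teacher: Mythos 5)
Your proposal follows essentially the same route as the paper: both locate the copy of $G_2$ generated by $w_c, a_c, t$, identify the relevant piece of $H_k(\mathbf{r})$ inside it with the generalized pair $\langle a_1t^{R}, a_2t^{r_c}\rangle \leq G_2$ where $R=\sum_{i<c}[w_c]_ir_i$ (the paper's $H_2(R,r_c)$, reduced to $H_2(R,0)$ by the automorphism $\eta_s$), and then invoke the base-case non-separability result (the paper's Proposition~\ref{BK}, a Burns--Karrass--Solitar-style argument) together with the fact that non-separability of the intersection passes up to the ambient pair (the paper's Lemma~\ref{simp}, which is your profinite-closure manipulation). The two pieces of work you explicitly defer --- the $c=2$ base case with exponents $(R,r_c)$, $R>0$, and the intersection computation $H_k(\mathbf{r})\cap\langle w_c,a_c,t\rangle$ --- are precisely what the paper supplies in Proposition~\ref{BK} and the surrounding lemmas, so the outline is sound and matches the paper's own reduction.
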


\begin{corollary}
$H_k(\mathbf{r})$ is a separable subgroup of $G_k$ if and only if $\mathbf{r}= (0, \dots, 0)$, that is, the subgroup is separable only in the obvious case that $H_k(\mathbf{r})= \langle a_1, \dots, a_k \rangle$.
\end{corollary}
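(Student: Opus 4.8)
The plan is to separate the two directions of the equivalence and, for the nontrivial one, to match the hydra group against the hypotheses of Theorem~\ref{mostgenHydra}. Writing the hydra group as $G_k=G_k(\mathbf w)$ with $\mathbf w=(1,a_1,a_2,\dots,a_{k-1})$, the first nontrivial relator word is $w_2=a_1$, so in the notation of Theorem~\ref{mostgenHydra} one has $c=2$, $[w_2]_1=1$, and the obstruction sum $\sum_{i=1}^{c-1}[w_c]_i r_i$ collapses to $r_1$. Thus Theorem~\ref{mostgenHydra} already settles one family of cases: whenever $r_1\neq 0$, the subgroup $H_k(\mathbf r)$ is non-separable in $G_k$.

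For the ``if'' direction, suppose $\mathbf r=(0,\dots,0)$, so $H_k(\mathbf 0)=\langle a_1,\dots,a_k\rangle=F_k$. I would exhibit the homomorphism $\varphi\colon G_k\to\mathbb Z$ determined by $a_i\mapsto 0$ and $t\mapsto 1$; each defining relation $a_i^t=a_ia_{i-1}$ is respected, and $\ker\varphi=F_k$ because the $a_i$ generate a normal subgroup with $G_k/F_k\cong\mathbb Z$. As the kernel of a homomorphism onto the residually finite group $\mathbb Z$, $F_k$ is separable.

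The remaining and genuinely delicate case is $r_1=0$ with $\mathbf r\neq\mathbf 0$. Let $j=\min\{i:r_i\neq 0\}\geq 2$, and note $a_1,\dots,a_{j-1}\in H_k(\mathbf r)$ since those exponents vanish. The natural reduction is to pass to the quotient $q\colon G_k\twoheadrightarrow \overline G:=G_k/\langle\langle a_1,\dots,a_{j-1}\rangle\rangle$. Setting $\bar a_1=\dots=\bar a_{j-1}=1$ makes $\overline G$ a hydra group on $\bar a_j,\dots,\bar a_k$, i.e.\ $\overline G\cong G_{k-j+1}$, and $q(H_k(\mathbf r))=H_{k-j+1}(r_j,\dots,r_k)$ has leading exponent $r_j\neq 0$; hence by the first paragraph $q(H_k(\mathbf r))$ is non-separable in $\overline G$.

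The main obstacle is that non-separability does not transfer back along $q$: that inference would require $\ker q=\langle\langle a_1,\dots,a_{j-1}\rangle\rangle\le H_k(\mathbf r)$, which fails since the normal closure contains elements such as $a_k^{-1}a_1a_k\notin H_k(\mathbf r)$. Indeed, separability of $H_k(\mathbf r)$ only furnishes finite-index overgroups of $H_k(\mathbf r)$, and these need not contain $\ker q$, so they need not descend to $\overline G$. I would therefore build the non-separability witness directly in $G_k$, mimicking the proof of Theorem~\ref{mostgenHydra} with $a_j$ and $t^{r_j}$ playing the roles that $a_1$ and $t^{r_1}$ play there and using the relation $a_{j+1}^t=a_{j+1}a_j$. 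The complication absent when $r_1\neq 0$ is that $a_j$ does not commute with $t$: the relation $a_j^t=a_ja_{j-1}$ carries the extra factor $a_{j-1}$. The feature that makes this tractable is precisely that $a_{j-1}$, and all of $a_1,\dots,a_{j-1}$, already lie in $H_k(\mathbf r)$, so these extra factors can be absorbed into the subgroup; verifying that the resulting element $g$ satisfies $g\notin H_k(\mathbf r)$ while lying in every finite-index subgroup containing $H_k(\mathbf r)$ is the heart of the argument and where I expect the real work to be. Together with the two directions above this yields the equivalence, and the corollary then follows since $H_k(\mathbf r)=\langle a_1,\dots,a_k\rangle$ precisely when $\mathbf r=\mathbf 0$.
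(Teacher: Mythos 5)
Your treatment of the two tractable cases is correct and is all the paper itself does: writing $G_k=G_k(\mathbf{w})$ with $\mathbf{w}=(1,a_1,\dots,a_{k-1})$ makes the obstruction sum of Theorem~\ref{mostgenHydra} collapse to $r_1$, so $r_1\neq 0$ gives non-separability, and $H_k(\mathbf{0})=\langle a_1,\dots,a_k\rangle=\ker(G_k\to\mathbb{Z})$ is separable. The gap is exactly the case you flag yourself, $r_1=0$ with $\mathbf{r}\neq\mathbf{0}$, but the situation is worse than ``the real work remains'': the witness you propose to build does not exist in general. Take $k=2$ and $\mathbf{r}=(0,r_2)$ with $r_2>0$. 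The map $a_1\mapsto a_1$, $a_2\mapsto a_2t^{r_2}$, $t\mapsto t$ respects both relations of $G_2$ (using $[a_1,t]=1$) and is surjective, hence is an automorphism since $G_2$ is Hopfian (Lemma~\ref{Hopf}); it carries the fiber $\langle a_1,a_2\rangle=\ker(G_2\to\mathbb{Z})$, which is separable because $\mathbb{Z}$ is residually finite, onto $H_2(0,r_2)$. So $H_2(0,r_2)$ \emph{is} separable even though $(0,r_2)\neq(0,0)$, and no amount of mimicking the Burns--Karrass--Solitar argument will produce a non-separability certificate there. For $k\geq 3$ this automorphism trick breaks on the relation $a_3^t=a_3a_2$, which is consistent with the obstacle you correctly identified: the bottom letter of your would-be $G_2$-copy no longer commutes with $t$, and Proposition~\ref{BK} explicitly excludes the degenerate value $r=0$ that your reduction would need.

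You should also know that the paper does not prove this corollary either: the remark immediately following it concedes that the case $\sum_{i=1}^{c-1}[w_c]_ir_i=0$ ``is not understood'' and that separability of $H_3(\mathbf{r})\leq G_3$ for $\mathbf{r}=(0,1,0)$ is open --- a tuple the corollary asserts is non-separable. What Theorems~\ref{genHydra} and~\ref{mostgenHydra} actually yield for $G_k$ is: separable when $\mathbf{r}=\mathbf{0}$, non-separable when $r_1\neq 0$, and, when $r_1=0$, sometimes separable (the $k=2$ computation above) and otherwise unknown. So the statement as printed is overstated; your instinct about where the difficulty sits was right, and your analysis of why the quotient by $\langle\langle a_1,\dots,a_{j-1}\rangle\rangle$ cannot transport non-separability back to $G_k$ is also correct. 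The honest conclusion is that the remaining case cannot be closed by these methods, not that it merely requires more work along the same lines.
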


\begin{remark}The case where $\sum_{i=1}^{c-1}[w_c]_i r_i = 0$ is not understood. In particular, we do not know whether or not $H_3(\mathbf{r})\leq G_3$ is separable for $\mathbf{r}=(0, 1, 0)$.
\end{remark}

We conclude that an example of a residually finite group with super-exponential Dehn function is unlikely to be found as an HNN extension over a subgroup of a hydra-like group.\\

The failure of residual finiteness for the groups $\Gamma_k$ and $\Gamma^{\prime}_k$ leads us to ask if the word problem for $\Gamma_k$ and $\Gamma^{\prime}_k$ is solvable. After all, residual finiteness of the group $G$ always provides a solution to the word problem for $G$ via McKinsey's Algorithm \cite{McKinsey}. Given a word $w$ in the generators of $G$, the algorithm runs two processes in parallel: one lists trivial words, looking for $w$, and the other lists homomorphisms to finite groups, looking to see if $w$ ever has non-trivial image. Our result shows that there are non-trivial elements of $\Gamma_k$ and $\Gamma_k^{\prime}$ for which finite quotients cannot be used to distinguish them from the identity element. Still, the word problems for $\Gamma_k$ and $\Gamma_k^{\prime}$ are decidable. Indeed, Dison and Riley showed that the distortion of $H_k$ in $G_k$ is bounded above by a recursive function, which implies the Membership Problem for $H_k$ is decidable, and therefore that the Word Problem for $\Gamma_k$ and $\Gamma_k^{\prime}$ is decidable \cite{Tim}. In fact, Dison, Einstein, and Riley have found a polynomial time solution to the Word Problem \cite{DER}. \\

I wish to thank my advisor, Tim Riley, for his help, suggestions, and corrections, and Mark Sapir, for a helpful conversation.
\end{section}

\begin{section}{Preliminaries}  

\begin{definition}
A group $G$ is \textit{residually finite} if every element ${x \in G-\{1\}}$ has a non-trivial image in some finite quotient of $G$.  Equivalently, the intersection of all finite index subgroups of $G$ is trivial.\\
\end{definition}
\begin{lemma}\label{7} If ${\Gamma= \langle G, p ~|~h^p=h, ~ h \in H \rangle}$ is residually finite, then $H$ is separable in $G$ and $G$ is residually finite.
\end{lemma}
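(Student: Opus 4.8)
The plan is to prove the two conclusions separately, extracting both from the structure of $\Gamma$ as an HNN extension of $G$ whose associated subgroups are both equal to $H$ and whose associated isomorphism is the identity. Residual finiteness of $G$ is the easy half: since $\Gamma$ is an HNN extension with base group $G$, the canonical map $G \to \Gamma$ is injective, so $G$ embeds as a subgroup of $\Gamma$, and residual finiteness is inherited by subgroups. Hence $G$ is residually finite.

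For separability of $H$, fix $g \in G \setminus H$; the goal is to produce a finite quotient of $G$ in which the image of $g$ avoids the image of $H$. The key observation is that an element $g \in G$ commutes with $p$ in $\Gamma$ precisely when $g \in H$. One direction is immediate from the presentation: if $h \in H$ then the defining relation $h^p = h$ gives $[h,p]=1$. For the converse, consider the commutator $[g,p] = g^{-1}p^{-1}gp$ for $g \notin H$; I claim it is nontrivial in $\Gamma$. This is where Britton's Lemma enters: the word $g^{-1}p^{-1}gp$ has $p$-length two, and its only candidate pinch is the subword $p^{-1}gp$, which can be reduced only when $g$ lies in the associated subgroup $H$. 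Since $g \notin H$, the word is reduced, so $[g,p] \neq 1$. I expect this normal-form computation to be the technical heart of the argument.

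Finally I would transfer this non-triviality back to $G$ using residual finiteness of $\Gamma$. Choose a finite quotient $\psi : \Gamma \to Q$ with $\psi([g,p]) \neq 1$, that is, $\psi(g)$ does not commute with $\psi(p)$. Let $C = C_Q(\psi(p))$ be the centralizer of $\psi(p)$ in the finite group $Q$. Every $h \in H$ satisfies $[h,p]=1$, so $\psi(H) \subseteq C$, while $\psi(g) \notin C$. Restricting $\psi$ to $G$ yields a homomorphism $\phi : G \to Q$ into a finite group with $\phi(H) \subseteq C$ but $\phi(g) \notin C$; in particular $\phi(g) \notin \phi(H)$. As $g$ was an arbitrary element of $G \setminus H$, this establishes that $H$ is separable in $G$.
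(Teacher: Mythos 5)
Your proof is correct and is essentially the paper's argument run in the direct rather than the contrapositive direction: both hinge on the fact that $[g,p]=1$ in every finite quotient forces $\phi(g)\in\phi(H)$-type conclusions, while $[g,p]\neq 1$ in $\Gamma$ itself exactly when $g\notin H$. Your version has the minor virtue of making the Britton's Lemma step explicit (the paper only asserts that $[p,g]$ is nontrivial in $\Gamma$) and of packaging the finite-quotient step via the centralizer of $\psi(p)$, but the underlying mechanism is the same.
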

\begin{proof} It is obvious that $G$ is residually finite, since residual finiteness is inherited by subgroups. Suppose that $H$ is not separable. We can find $g \in G- H$ such that for every homomorphism $\phi$ from $G$ to an arbitrary finite group, ${\phi(g) \in \phi(H)}$.  Consider an arbitrary map $\Phi: \Gamma \to Q$ for $Q$ finite. Since $\Phi$ restricts to a homomorphism on $G$, $\Phi(g)= \Phi(h)$ for some $h \in H$, and $${\Phi([p,g])=[\Phi(p), \Phi(g)]=[\Phi(p), \Phi(h)]= \Phi([p, h])=\Phi(1)=1}.$$ Although ${[p, g]}$ is non-trivial in the free amalgamated product $\Gamma$, it is trivialized in every finite quotient of $\Gamma$. Therefore, if $H$ is not separable in $G$, 
$\Gamma$ is not residually finite.
\end{proof}

By a theorem of Baumslag and Tretkoff, this necessary condition is actually sufficient: if $G$ is residually finite and $H$ is separable, then $\Gamma$ is residually finite \cite{BaumTret}.

\begin{remark}
	For any property $\mathcal{P}$, if $H$ is not $\mathcal{P}$-separable, then $\Gamma$ is not residually $\mathcal{P}$. Berlai has shown that for $\mathcal{P}$ the properties of solvability and amenability, so long as $G$ is residually $\mathcal{P}$, failure to be $\mathcal{P}$-separable is the only obstruction to $\Gamma$ being residually $\mathcal{P}$ \cite{Berlai}.
\end{remark}

\begin{lemma}\label{8} Suppose that $H<G$. Take $\overline{H}< \overline{G}$ to be another copy of our group-subgroup pair. If $H$ is not separable in $G$, then $\Gamma^{\prime}= \langle G, \overline{G}~|~a_it = \overline{a_it}, ~~1\leq i \leq k \rangle = G \ast_{H=\overline{H}} \overline{G}$ is not residually finite.
\end{lemma}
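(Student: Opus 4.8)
The plan is to mimic the proof of Lemma~\ref{7} almost verbatim, replacing the stable-letter commutator trick of the HNN extension with the natural element that the amalgamated free product provides for free. The setup is identical: assume $H$ is not separable in $G$, so there exists some $g \in G - H$ that every finite quotient $\phi \colon G \to Q$ fails to separate from $H$, meaning $\phi(g) \in \phi(H)$ for all such $\phi$. We wish to produce a single nontrivial element of $\Gamma^{\prime} = G \ast_{H = \overline{H}} \overline{G}$ whose image is trivial in every finite quotient of $\Gamma^{\prime}$.

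The element I would use is $w = g^{-1} \overline{g}$, where $\overline{g} \in \overline{G}$ is the copy of $g$ under the canonical isomorphism $G \to \overline{G}$. First I would check that $w \neq 1$ in $\Gamma^{\prime}$: since $g \notin H = \overline{H}$, the word $g^{-1}\overline{g}$ is a reduced word of length two in the amalgamated free product (its syllables come alternately from the two factors and neither lies in the amalgamated subgroup), so by the normal form theorem for amalgams it is nontrivial. Next I would show it dies in every finite quotient. Let $\Phi \colon \Gamma^{\prime} \to Q$ be any homomorphism to a finite group. Its restriction $\phi$ to $G$ is a finite quotient of $G$, so by the non-separability hypothesis $\phi(g) = \phi(h)$ for some $h \in H$. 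The key observation is that the amalgamation forces $\Phi$ to agree on $H$ and $\overline{H}$: since $a_i t = \overline{a_i t}$ in $\Gamma^{\prime}$ and these generate $H$ and $\overline{H}$ respectively, we have $\Phi(\overline{h}) = \Phi(h)$ for every $h \in H$. Combining these,
\begin{equation*}
\Phi(w) = \Phi(g)^{-1}\Phi(\overline{g}) = \Phi(h)^{-1}\Phi(\overline{h}) = \Phi(h)^{-1}\Phi(h) = 1,
\end{equation*}
where I have used $\Phi(\overline{g}) = \Phi(\overline{h})$, which follows because $\overline{\phi}$ (the restriction of $\Phi$ to $\overline{G}$) is a finite quotient of $\overline{G} \cong G$ that must send $\overline{g}$ into $\Phi(\overline{H}) = \Phi(H)$ in exactly the same way—more carefully, one transports the equality $\phi(g)=\phi(h)$ across the isomorphism and uses $\Phi|_{\overline{H}} = \Phi|_H$ to land back in $\Phi(H)$.

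The one step that requires a little care is the very last displayed equality, and this is where I expect the only real subtlety to lie. The non-separability hypothesis is stated for finite quotients of $G$, and I must ensure that the same witnessing element $h$ (or at least the same image) works on the $\overline{G}$ side. The clean way to see this: the isomorphism $G \to \overline{G}$ carries the pair $(G,H)$ to $(\overline{G},\overline{H})$, so applying $\Phi$ to $\overline{G}$ is literally applying a finite quotient of $G$ to the identified copy, and the amalgamating relations guarantee that whatever element of $H$ the $G$-side lands in, the $\overline{G}$-side lands in the corresponding element of $\overline{H}$ with the \emph{same} image in $Q$. Thus $\Phi(g)$ and $\Phi(\overline{g})$ coincide, and $w \in \ker \Phi$. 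Since $w \neq 1$ but lies in every finite-index normal subgroup, $\Gamma^{\prime}$ is not residually finite, completing the proof.
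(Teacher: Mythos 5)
Your choice of element $g^{-1}\overline{g}$ and the overall strategy match the paper's, and your verification that it is nontrivial via the normal form theorem is fine. But the step you yourself flag as delicate --- the equality $\Phi(g)=\Phi(\overline{g})$ --- is not actually established by your argument, and this is a genuine gap. Applying non-separability to the restriction $\phi=\Phi|_G$ gives $\phi(g)=\phi(h)$ for \emph{some} $h\in H$; applying it to $\overline{\phi}=\Phi|_{\overline{G}}$ (transported through the isomorphism) gives $\overline{\phi}(\overline{g})=\overline{\phi}(\overline{h'})$ for \emph{some possibly different} $h'\in H$. The amalgamating relations tell you $\Phi(h)=\Phi(\overline{h})$ for each fixed $h$, so all you can conclude is that both $\Phi(g)$ and $\Phi(\overline{g})$ lie in the subgroup $\Phi(H)\leq Q$ --- not that they are the same element of it. The maps $\phi$ and $\overline{\phi}$ are genuinely different homomorphisms from (copies of) $G$ to $Q$ that are only constrained to agree on $H$, and nothing in your argument forces their values at $g$ to coincide. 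Your phrase ``lands in the corresponding element of $\overline{H}$ with the same image'' is asserting exactly the conclusion that needs proof.

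The paper closes this gap with a diagonal trick: writing $\psi$ and $\phi$ for the two restrictions of $\Phi$ regarded as homomorphisms $G\to Q$, form the single homomorphism $\Psi=(\psi,\phi)\colon G\to Q\times Q$. The target is still finite, so non-separability applies to $\Psi$ itself and yields $\Psi(g)\in\Psi(H)$. Since $\psi$ and $\phi$ agree on $H$, the image $\Psi(H)$ lies in the diagonal $\Delta\leq Q\times Q$, whence $(\psi(g),\phi(g))\in\Delta$ and $\psi(g)=\phi(g)$. The essential point you are missing is that non-separability must be invoked once, for the product homomorphism, rather than separately for each factor; the two separate invocations are strictly weaker and do not suffice.
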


\begin{proof} We will show that if $g \in G-H$ is an element that cannot be separated from $H$ in finite quotients, that the non-trivial element $g^{-1}\overline{g} \in \Gamma^{\prime}$ is trivialized in every finite quotient, so $\Gamma^{\prime}$ is not residually finite. An arbitrary map $\Phi$ from $\Gamma^{\prime}$ to a finite group $Q$ will factor as a pair of maps $\psi,\phi: G \to Q$. By the definition of amalgamation, $\psi(h)=\phi(h)$ for all $h \in H$. To see that $g^{-1}\overline{g}$ is trivialized, we show that the functions $\phi$ and $\psi$ agree on $g$ as well. Construct $\Psi: G \to Q \times Q$, which is $(\psi, \phi)$. This new target group is still finite, and the image of $H$ is contained in the diagonal. As $\Psi(g) \in \Psi(H) \subset \Delta$, this implies that $\psi(g)=\phi(g)$. Therefore $\Phi(g^{-1}\overline{g}) =1$.\end{proof}

\begin{remark} If taking the direct product of groups preserves property $\mathcal{P}$ (eg. solvability, amenability), then if $H$ is not $\mathcal{P}$-separable in $G$, the same proof as above implies that $\Gamma^{\prime}=\langle G, \overline{G} ~|~ H = \overline{H} \rangle$ is not residually-$\mathcal{P}$. Kahrobaei has shown that for $\mathcal{P}$ the properties of solvability and amenability, so long as $G$ is residually $\mathcal{P}$, failure to be $\mathcal{P}$-separable is the only obstruction to  $\Gamma^{\prime}$ being residually $\mathcal{P}$\cite{DK}.
\end{remark}

We will use Lemmas \ref{7} and \ref{8} to show that the HNN extensions $\Gamma_k$ and the amalgamated products $\Gamma_k^{\prime}$ are not residually finite, by recognizing that the subgroup $H_k$ is not separable in $G_k$.

\begin{definition} A group $G$ is Hopfian if every surjective endomorphism of $G$ is an automorphism. 
\end{definition}
 
 In 1971 Gilbert Baumslag proved in \cite{BG}:
\begin{lemma}\label{Hopf}Finitely generated free-by-cyclic groups are residually finite, and therefore Hopfian. 
\end{lemma}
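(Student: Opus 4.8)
The plan is to prove the two assertions in turn, residual finiteness first and then Hopficity. The second implication is immediate from Malcev's theorem that a finitely generated residually finite group is Hopfian, so I would record that argument only briefly and concentrate on residual finiteness. For this I would write a finitely generated free-by-cyclic group as $G = F \rtimes_\phi \mathbb{Z}$, where $F$ is free, $\mathbb{Z} = \langle t \rangle$, and $\phi$ is the automorphism of $F$ realized by conjugation by $t$; I would mainly treat the case that $F$ has finite rank, which is the one relevant to $G_k = F_k \rtimes \mathbb{Z}$. Given $1 \ne g \in G$, I would split according to the image of $g$ in $\mathbb{Z}$.

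If $g$ has nonzero image $m$ under the projection $G \to G/F \cong \mathbb{Z}$, then composing with the reduction $\mathbb{Z} \to \mathbb{Z}/(|m|+1)\mathbb{Z}$ sends $g$ to a nontrivial element of a finite group, and we are done. The substance is the case $g \in F \setminus \{1\}$, where the key point is to produce a finite-index subgroup of $F$ that both omits $g$ and is invariant under $\phi$. Since $F$ is residually finite, I would choose a finite-index subgroup $N \le F$ with $g \notin N$ and set $n = [F:N]$. Because $F$ is finitely generated it has only finitely many subgroups of each finite index, so the intersection
$$ F_0 = \bigcap \{ K \le F : [F:K] \le n \} $$
is a finite-index subgroup; it is characteristic, being defined purely by an index condition, and it is contained in $N$, so $g \notin F_0$. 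Characteristic-ness gives $\phi(F_0) = F_0$, whence $F_0$ is normal in all of $G$.

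It then remains to separate the image of $g$ in $G/F_0$. This quotient is $\overline{F} \rtimes \mathbb{Z}$ with $\overline{F} = F/F_0$ finite, so the induced automorphism $\overline{\phi}$ has finite order $d$; consequently $t^d$ centralizes $\overline{F}$ and the subgroup $\langle \overline{F}, t^d \rangle \cong \overline{F} \times \mathbb{Z}$ has finite index $d$ in $G/F_0$. As $\overline{F} \times \mathbb{Z}$ is residually finite and residual finiteness passes from a finite-index subgroup to the whole group, $G/F_0$ is residually finite; since the image of $g$ there is nontrivial, it survives in a finite quotient of $G/F_0$, hence of $G$. For the Hopfian conclusion I would then invoke Malcev: if $\psi \colon G \to G$ were a non-injective epimorphism, pick $1 \ne g \in \ker \psi$ and a finite quotient $q \colon G \to Q$ with $q(g) \ne 1$; since $G$ is finitely generated the set $\mathrm{Hom}(G,Q)$ is finite, so the surjections $q \circ \psi^j$ must repeat, forcing $q = q \circ \psi^{j}$ for some $j \ge 1$ and hence $q(g) = q(\psi^{j}(g)) = 1$, a contradiction.

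The main obstacle, and the only place where more care is needed, is the fully general statement in which the free normal subgroup $F$ is allowed to have infinite rank: then ``finitely many subgroups of each index'' fails for $F$, so $F_0$ cannot be manufactured so cheaply, and one must instead exploit that $G$ itself is finitely generated to confine $g$ to a finitely generated, $\phi$-quasi-invariant free subgroup before extracting a $\phi$-invariant finite-index kernel. This is the content of Baumslag's original argument; for the present paper it can be bypassed entirely, since every group to which the lemma is applied has free part of finite rank.
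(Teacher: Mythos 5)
Your proof is correct, but it takes a genuinely different route from the paper: the paper offers no argument at all for this lemma, simply citing Baumslag's 1971 theorem, whereas you give a self-contained proof of the special case in which the free normal subgroup has finite rank. Your argument is sound throughout: the projection to $\mathbb{Z}$ handles elements outside $F$; the intersection $F_0$ of all subgroups of index at most $n$ is indeed finite-index (M.~Hall: a finitely generated group has finitely many subgroups of each index) and characteristic, hence $\phi$-invariant and normal in $G$; the quotient $G/F_0$ is (finite)-by-$\mathbb{Z}$ and therefore virtually $\overline{F}\times\mathbb{Z}$, so residually finite; and the Mal'cev argument for Hopficity is standard and correctly executed. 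You are also right to flag the one real limitation: a finitely generated free-by-cyclic group can have an infinitely generated free kernel (already the kernel of $F_2\twoheadrightarrow\mathbb{Z}$ is infinite rank), and there your construction of a characteristic finite-index subgroup breaks down; that general case is the substance of Baumslag's paper. What the citation buys the paper is the full statement with no rank hypothesis; what your argument buys is transparency and self-containment, and it genuinely suffices here because the only groups to which the lemma is applied are the hydra groups $G_k=F_k\rtimes\mathbb{Z}$, whose free kernel $\langle a_1,\dots,a_k\rangle$ has finite rank. If you wanted your version to stand in for the lemma as stated, you should either restate the lemma with the finite-rank hypothesis or retain the reference to Baumslag for the general case.
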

In particular, as the groups $G_k$ are free-by-cyclic, they are Hopfian, so to check that endomorphisms are automorphisms we need only check that they are surjective.
\end{section}

\begin{section}{$H_k$ is not a separable subgroup of $G_k$}
\begin{lemma}
$H_2$ is not a separable subgroup of $G_2$.
\end{lemma}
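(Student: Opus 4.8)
The plan is to recognize $(G_2,H_2)$ as isomorphic to the pair $(G_{BKS},H_{BKS})$ of Burns, Karrass, and Solitar and then transport their non-separability result across the isomorphism. Recall that $G_2=\langle a_1,a_2,t\mid a_1^t=a_1,\ a_2^t=a_2a_1\rangle$ is the free-by-cyclic group $F_2\rtimes_\phi\mathbb{Z}$, where $F_2=\langle a_1,a_2\rangle$ and $\phi$ is the transvection $a_1\mapsto a_1,\ a_2\mapsto a_2a_1$ (the type of monodromy occurring in the graph-manifold groups studied in \cite{BKS}), and $H_2=\langle a_1t,a_2t\rangle$ is meant to correspond to the distinguished finitely generated subgroup that \cite{BKS} prove is not separable. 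The underlying principle is that separability of a subgroup is an invariant of the isomorphism type of the pair: an isomorphism $\theta\colon G_2\to G_{BKS}$ restricts to a bijection on finite-index normal subgroups, hence identifies the profinite topology of $G_2$ with that of $G_{BKS}$, so $H_2$ is separable in $G_2$ if and only if $\theta(H_2)$ is separable in $G_{BKS}$. It therefore suffices to produce such a $\theta$ with $\theta(H_2)=H_{BKS}$ and invoke \cite{BKS}.

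First I would write $(G_{BKS},H_{BKS})$ in its presentation from \cite{BKS} and define $\theta$ on generators, matching the two presentations after the appropriate relabeling of $\{a_1,a_2,t\}$ (possibly replacing $a_1$ by $a_1^{-1}$ to reconcile the two sign conventions for the transvection $a_2\mapsto a_2a_1$ versus $a_2\mapsto a_2a_1^{-1}$). That $\theta$ is a homomorphism is checked by verifying that it sends each defining relator of $G_2$ to a relator of $G_{BKS}$; that it is an isomorphism follows either by exhibiting the evidently inverse assignment, or --- since $G_2$ is free-by-cyclic and hence Hopfian by Lemma~\ref{Hopf} --- by checking surjectivity alone.

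The crux is the subgroup correspondence $\theta(H_2)=H_{BKS}$, which requires reconciling two a priori different generating sets for the \emph{same} subgroup rather than establishing a single containment. I would make the match visible by passing to cleaner generators of $H_2$: writing $s:=a_1t$ one has $a_2t=(a_2a_1^{-1})s$, so $H_2=\langle s,\,a_2a_1^{-1}\rangle=\langle a_1t,\,a_2a_1^{-1}\rangle$, and I would compare $s$ and $a_2a_1^{-1}$ against the generators recorded in \cite{BKS}. The main obstacle lies exactly here: verifying equality, and not merely inclusion, of the two finitely generated subgroups inside $G_2$ demands a normal-form argument in the free-by-cyclic group --- tracking the $t$-exponent homomorphism together with the induced $\mathbb{Z}[t^{\pm1}]$-action of $\phi$ on $F_2^{\mathrm{ab}}$ --- to rule out that one generating set secretly produces extra elements.

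Finally, with $\theta$ in hand the conclusion transports directly: if $g\in G_{BKS}\setminus H_{BKS}$ is the element \cite{BKS} exhibit with $q(g)\in q(H_{BKS})$ for every finite quotient $q$, then $\theta^{-1}(g)\in G_2\setminus H_2$ has image in the image of $H_2$ under every finite quotient of $G_2$, so $H_2$ is not separable. As a check on the mechanism (and a fallback independent of \cite{BKS}), note that the rewriting above shows $H_2\cap F_2$ to be the $H_2$-normal closure of $a_1a_2^{-1}$; since $\phi^{-1}$ carries the class $a_1-a_2$ through all the classes $na_1-a_2$ in $F_2^{\mathrm{ab}}$, this intersection surjects onto $F_2^{\mathrm{ab}}$, which is the feature driving non-separability, with the obstructed element a suitable preimage of $a_1$. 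The route through \cite{BKS} is cleaner, so that is the one I would carry out.
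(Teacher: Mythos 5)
Your high-level strategy --- transport the Burns--Karrass--Solitar non-separability result across an isomorphism of pairs $(G_2,H_2)\cong(G_{BKS},H_{BKS})$ --- is exactly the paper's strategy, and your observation that separability is an invariant of the isomorphism type of the pair is correct. But there is a genuine gap at what you yourself identify as the crux. You hope that the required isomorphism $\theta$ is essentially the obvious relabeling of presentations ($\alpha\leftrightarrow a_2$, $\beta\leftrightarrow a_1$, $y\leftrightarrow t$, up to signs), after which $\theta(H_2)=H_{BKS}$ would be verified by comparing generating sets of the \emph{same} subgroup. That equality is false. Under the relabeling isomorphism, $H_{BKS}=\langle \alpha^{-1},\,y\alpha^{-1}y^{-2}\alpha\rangle$ pulls back to $\langle a_2^{-1},\,ta_2^{-1}t^{-2}a_2\rangle=\langle a_2^{-1},\,a_1^2t^{-1}\rangle$ (using $a_2^{-1}t^{-1}a_2=a_1t^{-1}$ and $[a_1,t]=1$), which visibly contains $a_2$; whereas $H_2=\langle a_1t,a_2t\rangle$ does not contain $a_2$ (if it did it would contain $t$ and $a_1$ and be all of $G_2$, contradicting that $H_2$ is a proper free subgroup). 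So these are genuinely different subgroups of $G_2$, and your plan of ``reconciling two generating sets for the same subgroup'' would, if carried out, reveal the mismatch rather than resolve it. No adjustment of signs on $a_1$ fixes this.

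What is actually needed --- and what constitutes the entire content of the paper's proof --- is a \emph{nontrivial} automorphism of $G_2$ carrying $H_2$ onto $\langle a_2^{-1},\,ta_2^{-1}t^{-2}a_2\rangle$. The paper exhibits one explicitly ($a_1\mapsto a_1$, $a_2\mapsto a_2^{-2}ta_2$, $t\mapsto a_2^{-1}t^{-1}a_2$), checks it is an endomorphism on the relators, checks surjectivity, and invokes Hopficity of the free-by-cyclic group $G_2$ to conclude it is an automorphism; only then is the pair isomorphism to $(G_{BKS},H_{BKS})$ ``apparent from the definitions.'' Your proposal does not produce this automorphism, and your fallback sketch (that $H_2\cap F_2$ normally generates enough of $F_2^{\mathrm{ab}}$, ``which is the feature driving non-separability'') is a heuristic, not an argument: the BKS non-separability proof is a substantial piece of work involving cores of finite-index subgroups and a normality argument in an infinitely generated subgroup, and cannot be replaced by an abelianization computation. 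As written, the proof is incomplete at its central step.
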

\begin{proof}
To show that $G_2$ is not $H_2$-separable, we use the result of Burns, Karrass, and Solitar \cite{BKS} that
$$H_{BKS}=\langle \alpha^{-1},~ y\alpha^{-1}y^{-2}\alpha \rangle ~~\leq~~ G_{BKS}= \langle \alpha, \beta, y~|~\alpha^{y} = \alpha\beta, \beta^{y} = \beta \rangle$$ is not separable. We demonstrate an automorphism $\phi$ of $G_2$ from which the isomorphism carrying $(G_2, \phi(H_2))$ to $(G_{BKS}, H_{BKS})$ is clear.
Recall the presentation 
 $$H_2= \langle a_1t, a_2t \rangle ~~ \leq ~~ G_2= \langle a_1, a_2, t ~|~ a_1^t=a_1,~ a_2^t=a_2a_1 \rangle= \langle a_1, a_2, t ~|~ [a_1, t]=1,~ [a_2, t]=a_1 \rangle.$$
 Consider the map:
 \begin{align*}
  a_2 &\mapsto a_2^{-2}ta_2\\
  t~ &\mapsto a_2^{-1}t^{-1}a_2=a_1t^{-1}\\
  a_1 &\mapsto a_1.
 \end{align*} 
 We verify that $\phi$ is an endomorphism: 
 \begin{align*}
	 [\phi(a_1), \phi(t)] &=[a_1,  a_1t^{-1}]=a_1^{-1}ta_1^{-1}a_1a_1t^{-1}=1= \phi([a_1, t]), \\
	 [\phi(a_2), \phi(t)]&= [a_2^{-2}ta_2, a_2^{-1}t^{-1}a_2]=[a_2^{-1}t,~ t^{-1}]^{a_2}= a_2^{-1}(t^{-1}a_2ta_2^{-1}tt^{-1})a_2= [a_2,t]=a_1 = \phi(a_1).
 \end{align*}
$\phi$ is also surjective: \begin{align*}
a_1&=\phi(a_1)\\
a_2&=\phi((a_2t)^{-1})\\
t~&=\phi(t^{-1}a_1)
\end{align*} Because $G_2$ is Hopfian, and $\phi$ is a surjective endomorphism, it must be an automorphism. The image of $a_1t$ is $$a_1a_2^{-1}t^{-1}a_2= [a_2, t]a_2^{-1}t^{-1}a_2= a_2^{-1}t^{-1}a_2ta_2^{-1}t^{-1}a_2 = (a_2^{-1}t^{-1}a_2)t(a_2^{-1}t^{-1}a_2),$$ and noting that $t$ and $a_2^{-1}t^{-1}a_2$ commute, we can write $\phi(a_1t)=ta_2^{-1}t^{-2}a_2$. The image of $a_2t$ is $a_2^{-1}$. Therefore $${\phi(H_2) = \langle a_2^{-1}, ta_2^{-1}t^{-2}a_2 \rangle}.$$ The isomorphism between $(G_2, \phi(H_2))$ and $(G_{BKS}, H_{BKS})$ is apparent from their definitions. By \cite{BKS}, $H_{BKS}$ is not a separable subgroup of $G_{BKS}$, and therefore $H_2$ is not a separable subgroup of $G_2$.\end{proof}

Next we show that the non-separability of $H_k$ in $G_k$ follows from the non-separability of $H_2$ in $G_2$. There is a natural inclusion $G_2 \hookrightarrow G_k$ which is just $a_1 \mapsto a_1, a_2 \mapsto a_2, t \mapsto t$. In the following we will abuse notation and write $G_2$ and $H_2$ for the image in $G_k$ of $G_2$ and $H_2$ under the inclusion. Dison and Riley develop a description of elements of $G_2 \cap H_k$ in \cite{Tim}, which we include here for the convenience of the reader:\\
Assign an order to the elements $\{a_1,\dots, a_k \}$, which we will call `priority': $a_{i+1}> a_{i}$ for all $i$. 
\begin{definition} The piece decomposition of a word $u \in F_k = \langle a_1, \dots, a_k \rangle$, is a grouping $u\cong \pi_1 \cdots \pi_l$ where $\pi_i$ are maximal words without occurrences of $a_k^{\pm 1}$, except possibly with prefix $a_k$ or suffix $a_k^{-1}$.	
\end{definition}

For example, $a_1a_2^{-1}a_1^2a_2a_1^{-1}a_2a_1^{2}a_2^{-2}$ has piece decomposition $(a_1a_2^{-1})(a_1^2)(a_2a_1^{-1})(a_2a_1^2a_2^{-1})(a_2^{-1}),$ where the parentheses indicate the different pieces. This piece-decomposition can be recursively defined. In particular, words containing no $a_k$ can be broken into pieces with respect to the next highest priority letter occurring. 

\begin{lemma}[{Dison and Riley}]\label{Key}
  A word $w = t^r u$ represents an element of $H_k$ if and only if $u$ has piece decomposition $u = \pi_1 \cdots  \pi_l$ and these pieces satisfy that for $p_0 = r$, there is a $p_i$ such that $t^{p_i} \pi_{i + 1} \in H_k t^{p_{i + 1}}$ and $p_l = 0$. When $p_{i+1}$ exists satisfying $t^{p_i} \pi_{i + 1} \in H_k t^{p_{i + 1}}$, it is unique. \end{lemma}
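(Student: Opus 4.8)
The plan is to reduce everything to understanding how a word $w = t^r u$ can lie in $H_k = \langle a_1t, \dots, a_kt \rangle$ by tracking the generators carefully. Since every generator $a_it$ of $H_k$ carries exactly one $t$, and $H_k$ is generated by these, the total $t$-exponent of any element of $H_k$ (reading the word as written in $G_k$) equals the net count of $a_it$ minus $(a_it)^{-1}$ factors. So the basic bookkeeping device is to normalize any element of $H_k$ into the form (word in $F_k$)$\cdot t^m$, or more usefully to push the $t$'s through the $a_i$'s using the defining relations $a_i^t = a_i w_i$ (here $w_1 = 1$, $w_i = a_{i-1}$) and record how the $F_k$-part transforms. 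The first thing I would set up explicitly is this normal form: using $G_k = F_k \rtimes \langle t \rangle$, every element is uniquely $u\, t^r$ with $u \in F_k$, $r \in \mathbb{Z}$, and conjugation $t^{-1} a_i t = a_i w_i$ induces an automorphism $\theta$ of $F_k$; membership in $H_k$ then becomes a condition phrased purely in $F_k$ after stripping the $t$-part.

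Next I would make the piece decomposition do its intended work. The key observation is that a generator $a_kt$ (the only generator involving $a_k$, since $w_i$ for $i \le k$ never contains $a_k$) is the only way to introduce or cancel occurrences of $a_k^{\pm 1}$, and each such use shifts the running $t$-exponent by exactly $\pm 1$. This is why the $a_k^{\pm 1}$ letters are natural cut points: reading $w = t^r u$ from the left, I would process $u$ one piece $\pi_{i+1}$ at a time, maintaining an invariant $t$-exponent $p_i$ that records "how much $t$ we have accumulated so far" with $p_0 = r$. The forward direction (necessity) is the cleaner one: if $w \in H_k$, I would argue inductively that each piece boundary forces the element $t^{p_i}\pi_{i+1}$ to differ from an element of $H_k$ by a power $t^{p_{i+1}}$ on the right, and that consuming all pieces with $p_l = 0$ is forced because the total $t$-exponent of any element of $H_k$ written with all $t$'s collected must vanish when the $F_k$-part is trivial after $u$ is exhausted. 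For the reverse direction (sufficiency), I would concatenate the witnessing $H_k$-elements for each piece, using $t^{p_i}\pi_{i+1} = h_{i+1} t^{p_{i+1}}$ with $h_{i+1} \in H_k$, and telescope: $t^r u = t^{p_0}\pi_1 \cdots \pi_l = h_1 h_2 \cdots h_l \, t^{p_l} = h_1 \cdots h_l \in H_k$, using $p_l = 0$.

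The uniqueness of $p_{i+1}$ is where I expect the real content to sit, and I would treat it as the main obstacle. The claim is that once $p_i$ and $\pi_{i+1}$ are fixed, there is at most one integer $p_{i+1}$ with $t^{p_i}\pi_{i+1} \in H_k t^{p_{i+1}}$. The natural proof is to show that if $t^{p_i}\pi_{i+1} \in H_k t^{p_{i+1}}$ and also $\in H_k t^{p'_{i+1}}$, then $t^{p_{i+1}} \in H_k t^{p'_{i+1}}$, i.e. $t^{p_{i+1}-p'_{i+1}} \in H_k$; so it suffices to show that no nonzero power of $t$ lies in $H_k$. This I would prove by a retraction or abelianization argument: mapping $G_k \to \mathbb{Z}$ by sending each $a_i \mapsto 0$ and $t \mapsto 1$ sends $H_k$ into the image generated by the $t$-exponents of the $a_it$, which is all of $\mathbb{Z}$, so that crude invariant is insufficient. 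Instead I would use a finer invariant — for instance project $G_k \to G_k/\langle\langle a_2, \dots, a_k\rangle\rangle \cong \langle a_1, t \mid [a_1,t]=1\rangle \cong \mathbb{Z}^2$, under which $a_it \mapsto (0,1)$ for $i \ge 2$ and $a_1t \mapsto (1,1)$, forcing any element of $H_k$ mapping to $(0,m)$ to use the $a_1t$ generators with zero net exponent, and then iterate — to conclude $t^s \in H_k$ forces $s=0$.

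Throughout, I would lean on the fact, recorded by Dison and Riley, that the pieces $\pi_i$ are exactly the maximal $a_k$-free subwords (allowing a leading $a_k$ or trailing $a_k^{-1}$), so that the decomposition is canonical and the induction on the number of pieces is well-founded; the recursion on priority then lets the same argument be applied inside each piece with respect to $a_{k-1}$, giving an effective recursive membership test. The cleanest writeup proceeds by induction on $l$ for both directions simultaneously, isolating the single-piece case $t^p \pi \in H_k t^{p'}$ as the base lemma whose proof uses only the automorphism $\theta$ and the non-triviality of the $t$-power invariant.
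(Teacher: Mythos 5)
First, a framing point: the paper does not prove Lemma~\ref{Key} at all --- it is quoted from Dison and Riley \cite{Tim} ``for the convenience of the reader'' --- so there is no in-paper argument to compare against, and your proposal has to stand on its own. The easy half does: the telescoping argument for sufficiency ($t^{p_0}\pi_1\cdots\pi_l=h_1\cdots h_l t^{p_l}=h_1\cdots h_l$) is exactly right, and reducing uniqueness of $p_{i+1}$ to the claim $\langle t\rangle\cap H_k=\{1\}$ is the correct move. But your proposed proof of that claim fails. The quotient $G_k/\langle\langle a_2,\dots,a_k\rangle\rangle$ is not $\langle a_1,t\mid[a_1,t]=1\rangle\cong\mathbb{Z}^2$: killing $a_2$ turns the relation $a_2^t=a_2a_1$ into $a_1=1$, so the quotient is just $\langle t\rangle\cong\mathbb{Z}$ and the image of $H_k$ is everything. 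Worse, this is not repairable by a cleverer abelian or exponent-sum invariant: every homomorphism $\phi$ from $G_k$ to an abelian group kills $a_1,\dots,a_{k-1}$ (abelianizing $a_i^t=a_ia_{i-1}$ gives $a_{i-1}=1$), so $\phi(t^s)=\phi(a_1t)^s\in\phi(H_k)$ always. One genuinely needs a non-abelian argument here --- e.g.\ the normal-form analysis of the automorphism $\theta$, or a corridor argument of the kind the paper uses for the closely analogous statement $\langle t\rangle\cap H_2(r,0)=\{1\}$ in Lemma~\ref{tnotinH}.

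The second, larger gap is the necessity direction, which you describe only as ``argue inductively that each piece boundary forces\dots''. The crux there is a non-cancellation statement: if $h\in H_k$ is written as a reduced word in $(a_1t)^{\pm1},\dots,(a_kt)^{\pm1}$ and put into normal form $t^ru$, then the occurrences of $(a_kt)^{\pm1}$ in the $H_k$-word correspond, in order and without cancelling against one another, to the occurrences of $a_k^{\pm1}$ in the reduced word $u$; this is what makes the piece boundaries of $u$ line up with syllables of the $H_k$-word and produces (and forces uniqueness of) the intermediate exponents $p_i$. Since $\theta^{\pm1}$ sends $a_k$ to $a_k$ times a word in $\langle a_1,\dots,a_{k-1}\rangle$, adjacent $a_k^{\pm1}$ letters arising from distinct syllables can in principle meet and cancel, and ruling this out (showing it happens only when the $H_k$-word itself was unreduced) is the real technical work of Dison and Riley's proof. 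As written, your plan names the right cut points but asserts, rather than proves, that they behave as claimed.
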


\begin{lemma}[{Dison and Riley}]\label{intersect}
	$G_2 \cap H_k = H_2$.
\end{lemma}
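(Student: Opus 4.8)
My plan is to prove the two inclusions separately, the nontrivial content being $G_2\cap H_k\subseteq H_2$. First I would view $G_k$ as the semidirect product $F_k\rtimes\langle t\rangle$ with $F_k=\langle a_1,\dots,a_k\rangle$ free and $t$ acting by the automorphism $\theta(w)=w^t$ determined by $\theta(a_1)=a_1$ and $\theta(a_i)=a_ia_{i-1}$ for $i>1$; every $g\in G_k$ is written uniquely as $g=u\,t^{\epsilon(g)}$ with $u\in F_k$, and I write $\pi(g)=u$ for its free part. In these terms $G_2$ is exactly $\{g:\pi(g)\in\langle a_1,a_2\rangle\}$, since $\langle a_1,a_2\rangle$ is $\theta$-invariant (because $\theta(a_2)=a_2a_1$), and $H_2=\langle a_1t,a_2t\rangle$ sits inside both $G_2$ and $H_k$, giving $H_2\subseteq G_2\cap H_k$ immediately.

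For the reverse inclusion I would induct on $k$, the case $k=2$ being trivial. The inductive step reduces the whole statement to the single claim
\[
G_{k-1}\cap H_k = H_{k-1},
\]
where $G_{k-1}=\langle a_1,\dots,a_{k-1},t\rangle$ and $H_{k-1}=\langle a_1t,\dots,a_{k-1}t\rangle$: granting this, and using $G_2\subseteq G_{k-1}$, one gets $G_2\cap H_k=G_2\cap(G_{k-1}\cap H_k)=G_2\cap H_{k-1}$, which equals $H_2$ by the inductive hypothesis. So everything comes down to showing that an element of $H_k$ whose free part omits $a_k$ can be written without using the generator $a_kt$.

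To attack this, I would work with $H_k$ as the free group on $x_i:=a_it$ (Dison and Riley) and take a reduced word $h=x_{i_1}^{\epsilon_1}\cdots x_{i_m}^{\epsilon_m}$. Collecting all $t$'s on the right gives
\[
\pi(h)=\prod_{j=1}^m \theta^{-s_{j-1}}\!\big(a_{i_j}^{\epsilon_j}\big),\qquad s_{j-1}=\epsilon_1+\cdots+\epsilon_{j-1}.
\]
The key structural fact is that $\theta^{\pm n}(a_k)=a_k\cdot(\text{word in }a_1,\dots,a_{k-1})$, so each factor with $i_j=k$ contributes a single $a_k^{\epsilon_j}$ while every other factor lies in $F_{k-1}=\langle a_1,\dots,a_{k-1}\rangle$. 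Thus $\pi(h)$ is an alternating product of letters $a_k^{\pm1}$ separated by elements of the free factor $F_{k-1}$, and the contrapositive I want says: if some $x_k^{\pm1}$ occurs, then not all of these $a_k$'s cancel, i.e.\ $\pi(h)\notin F_{k-1}$. I would also record the sub-fact $\langle t\rangle\cap H_k=\{1\}$, which is the empty-word instance of Lemma~\ref{Key}, so that no nonempty reduced word in the $x_i$ can have trivial free part.

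The main obstacle is precisely controlling these cancellations, because the factors $\theta^{\pm n}(a_k^{\pm1})$ carry $F_{k-1}$-valued ``tails''. Consider an innermost cancelling pair $a_k^{\delta}\,V\,a_k^{-\delta}$ with $V\in F_{k-1}$ reducing to $1$, coming from two $x_k$-letters with only lower-priority letters between them. When the orientation is $a_k^{-1}\cdots a_k$ the tails do not intervene, so $V=1$ forces the intervening reduced word in $x_1,\dots,x_{k-1}$ to have trivial free part; by $\langle t\rangle\cap H_k=\{1\}$ that word is empty, contradicting reducedness. The opposite orientation $a_k\cdots a_k^{-1}$ is the genuinely hard case: here $V$ mixes the two tails with the intervening lower word, and ruling out $V=1$ amounts to showing that certain explicit elements (such as $a_{k-1}^{-1}t$ and its relatives) do not lie in $H_{k-1}$. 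This bookkeeping of how conjugation by $a_kt$ redistributes lower-priority letters is exactly what the piece-decomposition criterion of Lemma~\ref{Key} is designed to track, so I would either feed this case into Lemma~\ref{Key} directly or mimic its recursion, descending one priority level at a time, to conclude that the surviving $a_k$ cannot be removed. Establishing this last step rigorously is where the real work lies.
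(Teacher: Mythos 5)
Your proposal has the right skeleton --- the easy inclusion, the semidirect-product normal form $g=u\,t^{s}$, and the reduction to showing that an element of $H_k$ whose free part avoids $a_k$ needs no $x_k$ in its reduced expression --- but it does not close. You say so yourself: the orientation $a_k\cdots a_k^{-1}$ in your innermost-cancelling-pair analysis, where the $F_{k-1}$-valued tails of $\theta^{\mp n}(a_k^{\pm1})$ get interleaved with the free part of the intervening word, is left as ``where the real work lies.'' That case is not a routine check; it is essentially the entire content of Dison and Riley's membership criterion, and without it the reverse inclusion is unproved. (There is also a small bookkeeping slip: for $\epsilon_j=-1$ the factor collected is $\theta^{-s_j}(a_{i_j}^{-1})$ with $s_j=s_{j-1}-1$, not $\theta^{-s_{j-1}}(a_{i_j}^{-1})$; harmless in itself, but it matters when you track exactly which conjugate of the tail sits adjacent to which $a_k$.)

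The paper sidesteps all of this by using Lemma~\ref{Key} as the black box it is designed to be. Write $w=t^ru$ with $u\in\langle a_1,a_2\rangle$ (possible since $w\in G_2$ and $\langle a_1,a_2\rangle$ is invariant under the $t$-action). Lemma~\ref{Key} says $w\in H_k$ if and only if the pieces $\pi_1,\dots,\pi_l$ of $u$ admit integers $p_0=r,\dots,p_l=0$ with $t^{p_i}\pi_{i+1}\in H_kt^{p_{i+1}}$; since every $\pi_{i+1}$ involves only $a_1,a_2$, the recursive criterion only ever touches letters of priority at most $2$, so each condition upgrades to $t^{p_i}\pi_{i+1}\in H_2t^{p_{i+1}}$, and Lemma~\ref{Key} applied to $H_2$ gives $w\in H_2$. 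If you want to salvage your argument, the cleanest repair is the one you gesture at in your final sentence: abandon the hand-rolled cancellation analysis and feed the normal form into Lemma~\ref{Key}, which already performs the descent through priority levels that your innermost-pair case analysis is trying to reconstruct.
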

\begin{proof} From the definition, it is clear that $H_2 \subset G_2 \cap H_k$. Suppose that $w \in G_2 \cap H_k$. Using the free-by-cyclic normal form for $G_k$, rewrite $w= t^ru$ where $u \in \langle a_1,\dots, a_k \rangle$. Observe that as $w \in G_2$, this is also in the normal form for $G_2$, so $u \in \langle a_{1}, a_{2} \rangle$. By Lemma~\ref{Key}, this word is in $H_k$ if and only if $u$ has a piece decomposition $u = \pi_1 \cdots \pi_l$ and a tuple $(p_0=r,p_1,\dots, p_{l-1}, p_l=0)$, such that $t^{p_i}\pi_{i+1} \in H_kt^{p_{i+1}}$. The maximum priority letter that can occur in a word in $G_2 \cap H_k$ is $a_2$. As $t^{p_i}\pi_{i+1} \in H_kt^{p_{i+1}}$ contains no letters of priority greater than 2, we get that $t^{p_i}\pi_{i+1} \in H_2t^{p_{i+1}}$ for all $i$. From Lemma~\ref{Key} we have that $w \in H_2$. Therefore $G_2 \cap H_k = H_2$.\end{proof}

\begin{lemma}\label{simp} The intersection of a subgroup $H<G$ with a separable subgroup $S<G$ is separable in $H$.
\end{lemma}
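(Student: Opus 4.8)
The plan is to produce a separating finite quotient for $S \cap H$ inside $H$ by simply restricting to $H$ a finite quotient of $G$ that separates the relevant element from $S$. The only input needed is the separability hypothesis on $S \leq G$, together with the elementary observation that taking images under a homomorphism is monotone with respect to inclusion of subgroups.

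Concretely, I would argue as follows. Let $h \in H$ with $h \notin S \cap H$. Since $h \in H$, the failure $h \notin S \cap H$ forces $h \notin S$. By separability of $S$ in $G$, there is a finite quotient $\phi \colon G \to Q$ with $\phi(h) \notin \phi(S)$. Restrict $\phi$ to $H$: the map $\phi|_H \colon H \to \phi(H) \leq Q$ is a surjective homomorphism onto the finite group $\phi(H)$, hence a bona fide finite quotient of $H$. It remains to check that this quotient separates $h$ from $S \cap H$. Because $S \cap H \subseteq S$, we have $\phi(S \cap H) \subseteq \phi(S)$, and since $\phi(h) \notin \phi(S)$ we conclude $\phi(h) \notin \phi(S \cap H)$. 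Thus $\phi|_H$ witnesses that $h$ is separated from $S \cap H$ in a finite quotient of $H$, and since $h$ was arbitrary, $S \cap H$ is separable in $H$.

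There is no real obstacle here; the subtlety worth flagging is only the containment $\phi(S \cap H) \subseteq \phi(S)$ that converts non-membership in $\phi(S)$ into non-membership in the smaller set $\phi(S \cap H)$, which is exactly what makes restriction work rather than requiring any extraction of a new finite quotient. Equivalently, one can phrase the argument topologically: the inclusion $H \hookrightarrow G$ is continuous for the profinite topologies, so the subspace topology $H$ inherits from the profinite topology on $G$ is coarser than its own profinite topology; as $S$ is closed in the profinite topology of $G$, the set $S \cap H$ is closed in that subspace topology, hence closed in the finer profinite topology of $H$, which is precisely separability of $S \cap H$ in $H$. I would present the element-chasing version as the main proof, since it is self-contained and uses the stated definition of separability directly.
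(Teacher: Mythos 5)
Your proof is correct and is essentially the paper's argument in its equivalent language: the paper writes $S=\bigcap K$ over finite-index subgroups $K\geq S$ and distributes the intersection with $H$, noting each $K\cap H$ has finite index in $H$, while you take the finite quotient $\phi$ witnessing $h\notin S$ and restrict it to $H$ --- the two are related by $K=\phi^{-1}(\phi(S))$. The key observation (restricting the separating structure from $G$ to $H$, where the containment $\phi(S\cap H)\subseteq\phi(S)$, respectively the finiteness of $[H:K\cap H]$, does all the work) is the same in both.
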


\begin{proof}
As $S$ is separable, $\displaystyle S=\hspace{-.3cm}\bigcap_{\substack{S\leq K<G \\ \left[G:K\right] < \infty}}\hspace{-.3cm} K$. So $$S \cap H = \left( \bigcap_{\substack{S\leq K<G \\ \left[G:K\right] < \infty}}\hspace{-.3cm} K \right)\cap H = \bigcap_{\substack{S\leq K<G \\ \left[G:K\right] < \infty}}\hspace{-.2cm} \left(K \cap H\right)$$ and we have expressed $S \cap H$ as the intersection of a family of finite-index subgroups in $H$, since $K \cap H$ is finite index in $H$.\end{proof}

\begin{proof}[Proof of Lemma \ref{notsep}]
	By Lemma~\ref{simp}, if $G_2 \cap H_k$ is not separable in $G_2$, then $H_k$ is not separable in $G_k$. By Lemma~\ref{intersect}, $G_2 \cap H_k=H_2$, so since $H_2$ is not separable in $G_2$, $H_k$ is not separable in $G_k$.\end{proof}

Lemmas \ref{7} and \ref{8} showed that separability of $H_k$ in $G_k$ is a necessary condition for residual finiteness of $\Gamma_k$ and $\Gamma^{\prime}_k$, so the non-separability of $H_k$ in $G_k$ shown in Lemma \ref{notsep} implies Theorems \ref{notrf1} and \ref{notrf}.
\end{section}

\begin{section}{Generalizations of the Hydra Groups}
In the last section we saw that for every $k$, $H_k$ is not a separable subgroup of $G_k$. We were interested in these groups because Dison and Riley showed that $H_k$ is distorted like the Ackermann function $A_k$ in $G_k$, which forces the Dehn function of the doubles to be large. In this section we consider other pairs for which the machinery of Dison and Riley show that the analogous HNN extensions and free products with amalgamation will have exponential or superexponential Dehn function. We will show that these groups too are not residually finite.\\

The following proposition is an extension of the example of Burns, Karrass, and Solitar in \cite{BKS}. It is the key to proving Theorem \ref{genHydra}: the subgroup $H_k$ is separable in the generalized hydra group $G_k(\textbf{w})$ only when $G_k(\textbf{w}) = F_k \times\mathbb{Z}$.

\begin{prop}\label{BK} If $r>0$, the subgroup $H_2(r,1)=\langle a_1t^r, a_2t \rangle <G_2$ is not separable.
\end{prop}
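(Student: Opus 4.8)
The statement is equivalent to showing that $H:=H_2(r,1)$ is not closed in the profinite topology on $G_2$, i.e.\ that there is an element $g\in G_2\setminus H$ lying in $HN$ for every finite-index normal subgroup $N\trianglelefteq G_2$. The plan is to carry this out by a direct generalization of the Burns--Karrass--Solitar argument \cite{BKS}, which is the case $r=1$. I would work with the free-by-cyclic structure $G_2=F\rtimes\langle t\rangle$, where $F=\langle a_1,a_2\rangle$ is free and $t$ acts by the unipotent automorphism $\phi(v)=t^{-1}vt$ with $\phi(a_1)=a_1$ and $\phi(a_2)=a_2a_1$, so that $\phi^n(a_2)=a_2a_1^n$. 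Writing $\tau\colon G_2\to\mathbb{Z}$ for the exponent-sum-in-$t$ retraction, we have $F=\ker\tau$, and since $\tau(a_2t)=1$ the map $\tau|_H$ is onto; thus $H=(H\cap F)\rtimes\langle a_2t\rangle$, and the fibre part $H\cap F$ is the free subgroup of $F$ generated by the conjugates $(a_2t)^n\,z\,(a_2t)^{-n}$, $n\in\mathbb{Z}$, where $z=(a_1t^r)(a_2t)^{-r}\in F$. A short computation using $t^na_2t^{-n}=a_2a_1^{-n}$ writes $z$ explicitly, and it is this concrete free subgroup that I would analyse. (Conjugating by the automorphism used in the previous lemma, which carries $a_2t$ to $a_2^{-1}$, is an optional first move that places one generator of the image inside $F$ and may simplify the bookkeeping.)

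The next step is to pin down the trapped element $g$. Since $H$ becomes separable in the abelianized-fibre quotient $G_2\to\mathbb{Z}^2\rtimes_A\mathbb{Z}$, where $A$ is the unipotent matrix realizing $\phi$ on $F^{\mathrm{ab}}$ --- this is the integral Heisenberg group, which is polycyclic and hence subgroup separable --- the obstruction cannot be abelian: I expect $g$ to lie in $[F,F]$, so that it is invisible to this quotient, for instance a suitable conjugate of $[a_1,a_2]$ produced as a ``unipotent winding'' of the generating conjugates of $z$. To certify $g\notin H$ I would use the reduced-word (Stallings-graph) description of the explicitly known free subgroup $H\cap F\le F$. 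The hypothesis $r>0$ should enter precisely here, to guarantee that the winding produces a nonzero twist and hence an element genuinely outside $H\cap F$, as the quantity $\sum_{i<c}[w_c]_ir_i=r$ of Theorem~\ref{mostgenHydra} predicts.

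The heart of the proof is to show that $g$ cannot be detected by any finite quotient. Fix an arbitrary finite quotient $q\colon G_2\to Q$ and let $d$ be the order of $q(a_1)$. Because $\phi$ is unipotent, $q(\phi^d)$ fixes $q(a_1)$ and sends $q(a_2)$ to $q(a_2)q(a_1)^d=q(a_2)$, hence fixes all of $q(F)$; equivalently $q(t)^d$ commutes with $q(F)$, so the action of $q(t)$ on the finite group $q(F)$ is periodic with period dividing $d$. This periodicity is what lets a finite product of conjugates $q\big((a_2t)^n\,z\,(a_2t)^{-n}\big)\in q(H)$ telescope onto $q(g)$: the ``tail'' distinguishing $g$ from a finite sub-product is governed by the $q(t)$-orbit of $q(a_1)$, which closes up after $d$ steps, while the relation $\tau(a_2t)=1$ allows the $t$-content to be adjusted freely so that the factor of $r$ carried by $z$ is absorbed. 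Running this for $N=\ker q$ shows $g\in\overline{H}$, and therefore $H$ is not separable.

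I expect the finite-quotient computation to be the main obstacle. One must simultaneously (i) identify $g$ precisely and verify $g\notin H$ through the non-abelian structure of $F$ rather than its abelianization, and (ii) produce, uniformly in $q$, the explicit element of $H$ realizing $q(g)$ modulo $\ker q$; controlling the non-commuting interactions among the conjugates of $z$, and not merely their images in $F^{\mathrm{ab}}$, is the delicate point, and is where the Burns--Karrass--Solitar calculation must be reworked for general $r$. The same analysis should clarify the role of $r>0$: when $r=0$ the generator $a_1t^r$ already lies in $F$ and the winding degenerates, consistent with the failure of the hypothesis of Theorem~\ref{mostgenHydra} in that case.
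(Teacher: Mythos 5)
There is a genuine gap: what you have written is a plan rather than a proof, and the two load-bearing steps are both deferred. First, you never exhibit the trapped element $g$; you only ``expect'' it to lie in $[F,F]$ and to be ``a suitable conjugate of $[a_1,a_2]$ produced as a unipotent winding.'' The paper must (and does) name a specific word, namely $[t^{-1},a_2^{-2}t^{-1}a_2^{2}]$, and proving it lies outside $H_2(r,0)$ is itself a nontrivial computation (it uses the piece-decomposition membership criterion together with $\langle t\rangle\cap H_2(r,0)=\{1\}$, and this is exactly where $r>0$ enters, via the non-integrality of $r/(r+1)$). Second, and more seriously, the finite-quotient half of the argument is asserted, not carried out. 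Your one concrete observation --- that $q(t)^d$ centralizes $q(F)$ when $d$ is the order of $q(a_1)$ --- is true but is not the periodicity that drives the Burns--Karrass--Solitar argument, and it is not clear how it would ``telescope'' a product of conjugates of $z$ onto $q(g)$; your generators of $H\cap F$ are conjugates by powers of $a_2t$, not of $t$, so the periodicity you establish does not directly act on them. You acknowledge that ``controlling the non-commuting interactions among the conjugates of $z$'' is the delicate point; that delicate point is the entire content of the proposition.

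For contrast, the paper does not work with the fiber $F=\ker\tau$ at all. Following \cite{BKS}, it regards $G_2$ as an HNN extension with stable letter $a_2$ over the subgroup $\mathcal{T}=\langle t_k\rangle$, $t_k=t^{a_2^k}$, with $[t_k,t_{k+1}]=1$. The decisive structural facts are that $H\cap\mathcal{T}$ is generated by the ``staircase'' elements $t_i^{-1}t_{i-1}^{r+1}$, and that for any finite-index $L\supseteq H$ the pigeonhole principle forces a relation $t_nt_0^{-1}\in\mathrm{core}(L)\cap\mathcal{T}$ for some $n>0$. These two ingredients together make $N=(H\cap\mathcal{T})(\mathrm{core}(L)\cap\mathcal{T})$ normal in $\mathcal{T}$ with abelian quotient, which traps $[t_0^{-1},t_2^{-1}]$ in $L$. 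Nothing in your outline substitutes for this mechanism, so as it stands the argument does not go through; you would need either to carry out the BKS computation in your $F$-based coordinates (which looks harder, since $H\cap F$ has no comparable staircase structure) or to switch to the $\mathcal{T}$-decomposition.
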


\begin{remark}
For every $s \in \mathbb{Z}$ there is an automorphism, $\eta_s$ of $G_2$ which carries $H_2(r,0)$ to $H_2(r,s)$, defined by $\eta_s(a_2) = a_2t^{s-1},$ $\eta_s(t) = t$. Observe that $\eta_s(a_1) = \eta_s([a_2, t]) = [a_2t^s, t] = (a_2a_1^{s})^{-1}(a_2a_1^{s+1}) = a_1$. Below we will actually prove that $H_2(r,0)$ is not separable.
\end{remark}

\begin{remark}
We restrict to the case $r> 0$ in order to be able to apply the techniques of Dison and Riley. In particular, we wish to have the analogue of Lemma \ref{Key} in order to prove Lemma \ref{notinH}.
\end{remark}

\begin{remark} If there was an automorphism $\phi$ of $G_2$ carrying $H_2$ onto $H_2(r,0)$, the result would follow immediately. Suppose that there was such an automorphism $\phi$. Let $q: G_2 \to G_2^{ab} = \langle a_2, t~|~ [a_2,t]=1\rangle$ be the abelianization map. The automorphism $\phi: G_2 \to G_2$ descends to $\phi_{ab}$, an automorphism of the abelianization $G_2^{ab}$. The restriction of $\phi$ to $H_2$, called $\phi^{res}$, will descend to an isomorphism from $q(H_2)$ to $q(H_2(r, 0))$, which agrees with the restriction to $q(H_2)$ of $\phi_{ab}$. Note that $q(H_2) = q(G_2)$, and $q(H_2(r, 0))= \langle t^r, a_2 \rangle$. When $r>1$, this is a proper subgroup of $G_2^{ab}$.  This is a contradiction. Because $\phi_{ab}$ and $\phi_{ab}^{res}$ have the same domain and $\phi_{ab}^{res}$ is a restriction of $\phi_{ab}$, they should be the same function. However, these maps have different ranges. Therefore the proof of the proposition requires more than an application of Lemma \ref{notsep}.
\end{remark}
\begin{lemma}\label{tnotinH} $\langle t \rangle \cap H_2(r,0) = \{1\}$
\end{lemma}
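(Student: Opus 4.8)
The plan is to understand the structure of $H_2(r,0)=\langle a_1t^r, a_2\rangle$ via the free-by-cyclic normal form of $G_2$, and to show that no element of this subgroup reduces to a pure power of $t$ other than the identity. Recall that in $G_2 = F_2 \rtimes \langle t\rangle$ with $F_2=\langle a_1,a_2\rangle$, every element has a unique normal form $t^m u$ with $u\in F_2$, and the total $t$-exponent of a word is a homomorphism $\varepsilon: G_2\to\mathbb{Z}$ (the projection onto the $\langle t\rangle$ factor of the abelianization, or simply the exponent sum of $t$). The first step is to compute the $t$-exponents of the generators: $\varepsilon(a_1t^r)=r$ and $\varepsilon(a_2)=0$.

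Suppose an element $g\in H_2(r,0)$ equals $t^n$ for some $n\in\mathbb{Z}$. Since $\varepsilon$ is a homomorphism, applying it to a word representing $g$ in the generators $a_1t^r$ and $a_2$ gives $\varepsilon(g)=r\cdot(\text{exponent sum of }a_1t^r)=n$. This already forces $n$ to be a multiple of $r$, but more is needed. The cleaner approach I would take is to apply the retraction that kills $a_1$: there is a homomorphism $\rho: G_2\to \langle a_2,t\mid [a_2,t]=1\rangle\cong\mathbb{Z}^2$ obtained by sending $a_1\mapsto 1$ (this is well-defined because $a_1=[a_2,t]$ lies in the commutator subgroup, so setting $a_1=1$ is consistent with the relations, indeed $\langle a_2,t\rangle$ with $a_1$ killed is exactly the abelianization). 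Under $\rho$ we have $\rho(a_1t^r)=t^r$ and $\rho(a_2)=a_2$, so $\rho(H_2(r,0))=\langle t^r, a_2\rangle$, a subgroup of $\mathbb{Z}^2=\langle a_2\rangle\oplus\langle t\rangle$. If $t^n\in H_2(r,0)$, then $\rho(t^n)=t^n$ must lie in $\langle t^r,a_2\rangle$, whose intersection with the $\langle t\rangle$-axis is exactly $\langle t^r\rangle$; hence $r\mid n$.

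That only yields $n\in r\mathbb{Z}$, so the abelianization alone is insufficient and we must use the nonabelian structure. The key additional step is to track the $F_2$-part. Writing $t^n = (a_1t^r)^{\epsilon_1}a_2^{\delta_1}\cdots$ as a product of generators and pushing all $t$'s to the left using $a_i^t$-relations, the element $t^n$ has trivial $F_2$-part. I would argue that any nonempty reduced expression in the generators that is not already trivial produces a nontrivial $F_2$-component: conjugating $a_2$ by powers of $t$ (which is what happens when we collect $t$'s past the $a_2$ letters coming from the $a_2$ generators and the $a_1$ inside $a_1t^r$) yields words of the form $a_2^t = a_2a_1$, $a_2^{t^2}=a_2a_1^2$, etc., and the presence of any $a_2^{\pm1}$ generator forces a genuine $a_2$-occurrence in the $F_2$-part that cannot cancel against the $a_1$'s alone. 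The main obstacle I anticipate is making this cancellation argument fully rigorous: I must show that the only way to obtain a word with no $a_2$ and no $a_1$ in its $F_2$-part is the empty word, which amounts to showing $a_2$ injects and that the $\langle a_1t^r\rangle$ and $a_2$-contributions do not conspire to cancel. I would handle this by passing to the normal form $t^m u$ and observing that the $\langle t\rangle$-exponent $m$ of any element of $H_2(r,0)$ is determined by $\varepsilon$, while requiring $u=1$ together with $\rho(g)=t^n\in\langle t^r,a_2\rangle\cap\langle t\rangle$ and the free-group structure of $F_2$ forces the generating exponents to vanish, giving $g=1$ and hence $n=0$.
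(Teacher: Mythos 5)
Your reductions via the exponent homomorphism $\varepsilon$ and the retraction $\rho$ killing $a_1$ are correct, but as you yourself note they only yield $r \mid n$. The difficulty is that the step you defer --- ``the free-group structure of $F_2$ forces the generating exponents to vanish'' --- is not a technicality to be filled in later: it \emph{is} the lemma. An element of $\langle t\rangle \cap H_2(r,0)$ is by definition an element of $H_2(r,0)$ whose $F_2$-part in the normal form $t^m u$ is trivial, so the assertion that every nontrivial word in $a_1t^r$ and $a_2$ has nontrivial $F_2$-part is a restatement of the claim, not an argument for it. Your heuristic that an $a_2^{\pm 1}$ generator ``forces a genuine $a_2$-occurrence \dots that cannot cancel against the $a_1$'s alone'' does not close the gap, because the $a_2$-letters contributed by different generator occurrences can cancel against \emph{each other}: the $a_2$-exponent sum of $u$ is just $\sum\beta_i$, and nothing you have introduced rules out a word $(a_1t^r)^{\alpha_1}a_2^{\beta_1}\cdots(a_1t^r)^{\alpha_n}a_2^{\beta_n}$ with $\sum\beta_i=0$ collapsing to a power of $t$. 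No abelian invariant can: in $G_2^{\mathrm{ab}}$ the element $t^r$ genuinely lies in the image of $H_2(r,0)$, which is exactly why the lemma needs the nonabelian structure.

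What is missing is an analysis of how $a_1$ and $t$ interact \emph{across} the $a_2$-letters. The paper supplies this with an $a_2$-corridor argument in a van Kampen diagram over $\langle a_1,a_2,t \mid a_1^t=a_1,\ t^{a_2}=ta_1^{-1}\rangle$ for the word $(a_1t^r)^{\alpha_1}a_2^{\beta_1}\cdots(a_1t^r)^{\alpha_n}a_2^{\beta_n}t^{-m}$: an innermost $a_2$-corridor cuts off some subword $(a_1t^r)^{\alpha_i}$, which must then equal the corridor's side word $t^k$ or $t^ka_1^{-k}$ in $\langle a_1,t\rangle\cong\mathbb{Z}^2$; comparing $a_1$- and $t$-exponent sums forces $\alpha_i(r+1)=0$, contradicting $\alpha_i\neq 0$. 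Some local cancellation argument of this kind --- corridors, or an equivalent induction on the number of $a_2$-letters using the multiplication rule $t^{m_1}u_1\cdot t^{m_2}u_2 = t^{m_1+m_2}\bigl(t^{-m_2}u_1t^{m_2}\bigr)u_2$ --- is indispensable, and your proposal stops exactly where that work begins.
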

\begin{proof} Suppose for the contradiction that there is a non-trivial element of the intersection. It can be expressed either as an element of $H_2(r,0)$ or as an element of $\langle t \rangle$: $$(a_1t^r)^{\alpha_1}a_2^{\beta_1}\cdots (a_1t^r)^{\alpha_n}a_2^{\beta_n}=t^m$$ 
for some $\alpha_i, \beta_i$ and $m \neq 0$ . There is a van Kampen diagram for the word $w=(a_1t^r)^{\alpha_1}a_2^{\beta_1}\cdots (a_1t^r)^{\alpha_n}a_2^{\beta_n}t^{-m}$ over the $G_2$ presentation: $\langle a_1, a_2, t ~|~ a_1^{t}=a_1,~a_2^t=a_2a_1\rangle = \langle a_1, a_2, t ~|~ a_1^{t}=a_1,~t^{a_2}=ta_1^{-1}\rangle$. From the second presentation it is clear that there are $a_2$ corridors in any van Kampen diagram for which the boundary word contains either an $a_2$ or $a_2^{-1}$. See Bridson and Gersten \cite{BG} for a detailed account of corridors in van Kampen diagrams. Since the word $w$ contains the letter $a_2$, there are $a_2$-corridors. A corridor is innermost if the boundary word it cuts off is a word on only the generators $a_1$ and $t$. There are always at least two innermost corridors, so at least one of them will cut off a word $\delta$, which is of the form $(a_1t^r)^{\alpha_i}$ for some $i$. The word along the side of the $a_2$ corridor will either be a power $t^ka_1^{-k}$ or $t^k$, call it $\gamma$. The word $\delta \gamma^{-1}=1$, but we note that this equality is not possible, as there is a non-zero index sum of either $a_1$ or $t$ in $\delta \gamma^{-1}$. \end{proof}

\begin{lemma}\label{notinH} The word $[t^{-1}, a_2^{-2}t^{-1}a_2^{2}] ~\not\in H_2(r,0)$ when $r \geq 1$.
\end{lemma}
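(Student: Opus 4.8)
The plan is to show that $w := [t^{-1}, a_2^{-2}t^{-1}a_2^2]$ fails to satisfy the Dison--Riley membership criterion (the analogue of Lemma~\ref{Key}) for $H_2(r,0)$ when $r \geq 1$. First I would put $w$ into the free-by-cyclic normal form $w = t^m u$ with $u \in \langle a_1, a_2 \rangle$. Expanding $w = t a_2^{-2} t a_2^2 t^{-1} a_2^{-2} t^{-1} a_2^2$ and pushing all $t$'s to the left using the relations $a_1^t = a_1$ and $a_2^t = a_2 a_1$ (equivalently $t a_2 = a_2 a_1 t$, so $t a_2^{-1} = a_2^{-1} a_1^{-1} t$ and conjugation by powers of $t$ acts on $a_2$ by $t^k a_2 t^{-k} = a_2 a_1^k$), I would compute the total exponent sum of $t$, which is $1 + 1 - 1 - 1 = 0$, so $m = 0$ and $w = u$ is a genuine element of $F_2 = \langle a_1, a_2 \rangle$. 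The upshot is that membership in $H_2(r,0)$ must be checked via the piece decomposition of $u$ starting from $p_0 = r$ and requiring $p_l = 0$.

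Next I would carry out the explicit reduction of $u$ to a freely reduced word in $a_1, a_2$ and read off its piece decomposition with respect to the priority $a_2 > a_1$. Since the $t$'s cancel globally, the surviving word records only the $a_1$-exponents that were injected by sliding $t$'s past the $a_2$'s; concretely, each maximal $a_2$-block picks up a power of $a_1$ determined by how many net $t$'s stood to its left at the moment of reduction. I would then apply Lemma~\ref{Key} directly: with $p_0 = r$, track the unique sequence $p_1, p_2, \ldots$ forced by the conditions $t^{p_i}\pi_{i+1} \in H_2(r,0) t^{p_{i+1}}$, using that the only generators of $H_2(r,0)$ are $a_1 t^r$ and $a_2 t^0 = a_2$, so that each piece $\pi_{i+1}$ of the form $a_1^{\epsilon}$ forces $p_{i+1} = p_i - \epsilon r$ (shifting by multiples of $r$) while an $a_2^{\pm 1}$ forces $p_{i+1} = p_i$. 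The membership criterion then demands that the accumulated shift return $p_l$ to $0$ starting from $r$, i.e. that a certain integer combination of the $a_1$-exponents equals a prescribed value modulo the constraints imposed by $r$.

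The conclusion I expect is that the necessary arithmetic condition $p_l = 0$ cannot be met: the net $a_1$-contribution forced by the structure of $u$ gives a final index $p_l$ that is a fixed nonzero quantity (or lands in a residue class excluded by the requirement $p_0 = r \geq 1$), exactly paralleling the index-sum contradiction used in Lemma~\ref{tnotinH}. The main obstacle, and where I would spend the most care, is the bookkeeping in the second step: correctly computing the freely reduced form of $u$ and verifying that each piece genuinely lies (or fails to lie) in the correct $H_2(r,0)$-coset $H_2(r,0) t^{p_{i+1}}$, since an off-by-one in the $a_1$-exponents or a miscount of the $t$-slides would corrupt the final value of $p_l$. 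I would double-check this by independently confirming that $w \in G_2$ (which is transparent, as $w$ is a commutator of elements of $G_2$) and that its image under abelianization is trivial, forcing any detection of non-membership to come genuinely from the nonabelian piece structure rather than from exponent sums alone.
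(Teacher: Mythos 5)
Your overall strategy---normal form $t^m u$ with $m=0$, piece decomposition of $u$, then the analogue of Lemma~\ref{Key}---is exactly the route the paper takes, but the proposal stops before the decisive computation, and the machinery you set up for that computation contains errors that would prevent it from closing. First, you conflate two different $r$'s: in Lemma~\ref{Key} the initial value $p_0$ is the exponent of $t$ in the normal form of the word being tested, which you correctly compute to be $0$, so the recursion must start at $p_0=0$, not at $p_0=r$ (the parameter of the subgroup $H_2(r,0)$). Second, your transition rules are not right: for a piece consisting of a single $a_2$, the condition $t^{p}a_2\in H_2(r,0)t^{q}$ rewrites as $a_2a_1^{-p}t^{p-q}=a_2(a_1t^r)^{-p}t^{(r+1)p-q}\in H_2(r,0)$, which forces $q=(r+1)p$ rather than $q=p$; and in any case the pieces of $u$ here are not single letters---the decomposition is $(a_1a_2^{-1})(a_1^{-1})(a_2a_1^{-1}a_2^{-1})(a_1)(a_2)$---so rules stated only for $a_1^{\epsilon}$ and $a_2^{\pm1}$ do not cover the word at hand.

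Most importantly, the obstruction you ``expect''---a nonzero accumulated value of $p_l$---is not the one that actually occurs. In the paper's proof the recursion fails already at the \emph{first} piece $\pi_1=a_1a_2^{-1}$ because no \emph{integer} $p$ works: $a_1a_2^{-1}t^{p}\in H_2(r,0)$ forces $t^{p}a_1^{1-p}=(a_1t^r)^{1-p}t^{r(p-1)+p}\in H_2(r,0)$, hence $t^{r(p-1)+p}\in H_2(r,0)$, and Lemma~\ref{tnotinH} then gives $r(p-1)+p=0$, i.e.\ $p=r/(r+1)$, which is not an integer for $r\geq 1$. A bookkeeping scheme in which each piece deterministically produces an integer $p_{i+1}$ and one only checks whether $p_l=0$ at the end cannot detect this non-integrality obstruction. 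So the plan as written has a genuine gap: the arithmetic that constitutes the proof is deferred, and the rules proposed for carrying it out would not reveal the contradiction.
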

\begin{proof} The analogue of Lemma~\ref{Key} holds for $H_2(r,0)$. That is, we can decide whether or not a word in $\langle a_1, a_2 \rangle$ is in $H_2(r,0)$ by considering whether the rewriting can be carried out on each successive piece. 
\begin{align*}
ta_2^{-2}ta_2^2t^{-1}a_2^{-2}t^{-1}a_2^2 =(a_2a_1^{-1})^{-2}(a_2a_1^{-2})^2(a_2a_1^{-1})^{-2}a_2^2 =(a_1a_2^{-1})(a_1^{-1})(a_2a_1^{-1}a_2^{-1})(a_1)(a_2)
\end{align*}
where the parentheses in the final line separate the different pieces. There is no $p$ such that $a_1a_2^{-1} \in H_2(r,0)t^{-p}$. Suppose that there is. Then 
\begin{align*}
ht^{-p}=a_1a_2^{-1} ~\Rightarrow ~ h= a_1a_2^{-1}t^p= t^pa_1^{1-p}a_2^{-1}~\Rightarrow ~ t^pa_1^{1-p}=ha_2 \in H_2(r,0) .
\end{align*}
We can rewrite $t^pa_1^{1-p} \in H_2(r,0)$, as $(a_1t^r)^{1-p}t^{r(p-1)+p} \in H_2(r,0)$, and since $a_1t^r \in H_2(r,0)$, it follows that $t^{r(p-1)+p} \in H_2(r,0)$.
Lemma \ref{tnotinH} implies that $r(p-1)+p=0$, so $p(r+1)=r$. Since $r>0$, we get $p=\dfrac{r}{1+r}$, which is not an integer. Thus $ [t^{-1},~ a_2^{-2}t^{-1}a_2^{2}] \not \in H_2(r,0).$ \end{proof}

\begin{proof}[Proof of Proposition \ref{BK}] For $r > 0$, the arguments of Burns, Karrass, and Solitar can be translated directly to work for this easy variation of their example \cite{BKS}. For the convenience of the reader, we repeat their argument (almost) verbatim. We drop all decoration and use $H$ to refer to $H_2(r,0)$ throughout this proof.\\

Let $\mathcal{T}$ be the infinitely generated group $\langle t_k~|~[t_k,t_{k+1}]=1,~ k \in \mathbb{Z}\rangle$. To make calculations easier, Burns, Karrass, and Solitar rewrite $G_2$ as the HNN extension $G_2=\langle \mathcal{T},~a_2~|~k \in \mathbb{Z},~ t_k^{a_2} =t_{k+1}\rangle$. In our original presentation, $t_k=t^{a_2^k}$. This implies that $a_1 = [a_2, t] = a_2^{-1}t^{-1}a_2 t = t_1^{-1}t_0$, and the word $[t^{-1}, a_2^{-2}t^{-1}a_2^{2}]=[t_0^{-1}, t_2^{-1}]$.\\

Given an arbitrary finite-index subgroup $L$ satisfying $H< L$, Burns, Karrass, and Solitar find a subgroup $N < L \cap \mathcal{T}$ such that
$$H \cap \mathcal{T}< N \triangleleft~ \mathcal{T}.$$ 
Analysis of the quotient $\mathcal{T}/N$ will imply that the word $[t^{-1},~a_2^{-2}t^{-1}a_2^{2}]$ is contained in $N\subset L$. According to Lemma \ref{notinH}, $[t^{-1}, a_2^{-2}t^{-1}a_2^{2}]$ is not an element of $H$. As $L$ is an arbitrary finite index subgroup containing $H$, this implies that $H$ is not separable. \\

If $L$ is a finite-index subgroup $L <G$, the core of $L$, $\text{core}(L) =\displaystyle \cap_{g \in G} L^g$, is a finite-index normal subgroup. Moreover, $\text{core}(L) \cap \mathcal{T}$ is still normal and finite index in $\mathcal{T}$. The group $N$ above is given by $(H \cap \mathcal{T})(\text{core}(L)\cap \mathcal{T})$. The majority of the work of this proof is in showing that $N$ is normal in $\mathcal{T}$.\\

\begin{lemma} $H \cap \mathcal{T} = \langle t_{i}^{-1}t_{i-1}^{r+1}~ |~ i \in \mathbb{Z} \rangle$.
\end{lemma}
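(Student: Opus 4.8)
The plan is to recognize the proposed generating set as the set of $\langle a_2\rangle$-conjugates of the single element $a_1t^r$, and then to use the retraction of $G_2$ onto $\langle a_2\rangle$ with kernel $\mathcal{T}$ in order to identify the kernel of the induced map on $H:=H_2(r,0)=\langle a_1t^r, a_2\rangle$ with exactly the subgroup these conjugates generate.

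First I would rewrite the generators in terms of the $t_k$. Setting $x=a_1t^r$ and $y=a_2$, the identity $a_1=t_1^{-1}t_0$ together with $t=t_0$ gives $x=t_1^{-1}t_0^{r+1}$. Since $t_k^{a_2}=t_{k+1}$, conjugation by $a_2^{\,i-1}$ shifts every index up by $i-1$, so a direct computation yields $y^{-(i-1)}xy^{\,i-1}=t_i^{-1}t_{i-1}^{r+1}$. Writing $c_i:=t_i^{-1}t_{i-1}^{r+1}$, this exhibits the claimed subgroup $K:=\langle c_i \mid i\in\mathbb{Z}\rangle$ as the subgroup generated by all $\langle y\rangle$-conjugates of $x$; in particular $c_1=x$. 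The inclusion $K\subseteq H\cap\mathcal{T}$ is then immediate, since each $c_i$ lies in $\mathcal{T}$ and in $H$.

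For the reverse inclusion I would first check that $K$ is normal in $H$. Conjugation by $y$ sends $c_i$ to $c_{i+1}$ (again by the shift $t_k^{a_2}=t_{k+1}$), so it merely permutes the generating set of $K$, while conjugation by $x=c_1$ preserves $K$ trivially; hence $K\triangleleft\langle x,y\rangle=H$, and $K$ is in fact the normal closure of $a_1t^r$ in $H$. In the quotient $H/K$ the image of $x=c_1$ is trivial, so $H/K$ is cyclic, generated by the image of $y$.

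The crux is to show that $H/K$ is \emph{infinite} cyclic rather than a finite cyclic quotient, and for this I would invoke the homomorphism $\psi\colon G_2\to\mathbb{Z}=\langle a_2\rangle$ given by the $a_2$-exponent (equivalently, the retraction with $\ker\psi=\mathcal{T}$, sending $a_2\mapsto 1$ and $t\mapsto 0$, which kills $a_1=[a_2,t]$ and respects the relations of $G_2$). Its restriction $\phi=\psi|_H$ satisfies $\phi(x)=0$ and $\phi(y)=1$, and $\phi$ vanishes on $K\subseteq\mathcal{T}$, so $\phi$ factors through an isomorphism $H/K\xrightarrow{\sim}\mathbb{Z}$. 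Consequently $\ker\phi=K$, and since $\ker\phi=H\cap\mathcal{T}$ this is exactly the asserted equality. The only delicate points are the index-bookkeeping in the shift computation and the verification that $\psi$ is well defined on $G_2$; everything else is formal once $K$ has been identified as the normal closure of $a_1t^r$ in $H$.
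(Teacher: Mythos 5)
Your proof is correct and follows essentially the same route as the paper: both identify the generators $t_i^{-1}t_{i-1}^{r+1}$ as the $\langle a_2\rangle$-conjugates of $a_1t^r$ and use the $a_2$-exponent homomorphism (whose kernel contains $\mathcal{T}$) to pin down $H\cap\mathcal{T}$. Your normal-closure and cyclic-quotient argument simply makes explicit the step the paper asserts in one line, namely that the elements of $H$ with trivial $a_2$-index sum are exactly those generated by the $a_2$-conjugates of $a_1t^r$.
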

\begin{proof}
Notice that all elements of $\mathcal{T}$ have trivial $a_2$ index sum, since every element in the generating set has zero $a_2$ index sum: $t_i = t^{a_2^i}$. The elements of $H$ with trivial $a_2$ index sum are all generated by $a_2$ conjugates of $a_1t^r$, so $H \cap \mathcal{T} \leq \langle(a_1t^r)^{a_2^{i-1}}~|~i\in \mathbb{Z}\rangle = \langle (t_1^{-1}t_0^{r+1})^{a_2^{i-1}} ~|~i\in \mathbb{Z} \rangle  = \langle t_{i}^{-1}t_{i-1}^{r+1}~|~i\in \mathbb{Z}\rangle$. The other inclusion is clear. \end{proof}

\begin{clma}\label{LemmaNormal}
	$N = (H \cap \mathcal{T})(\text{\textup{core}}(L) \cap \mathcal{T})$ is normal in $\mathcal{T}$.
\end{clma}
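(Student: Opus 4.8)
===  PROOF PROPOSAL FOR CLAIM ===

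The goal is to show that $N = (H \cap \mathcal{T})(\text{core}(L) \cap \mathcal{T})$ is normal in $\mathcal{T}$. Let me set up notation: write $C = \text{core}(L) \cap \mathcal{T}$, which is already normal and finite index in $\mathcal{T}$ by the preceding discussion. Since $\mathcal{T}$ is generated by the $t_k$, and since $N$ is a product of two subgroups of the abelian-ish group $\mathcal{T}$ (note $\mathcal{T}$ is nilpotent-free-ish; actually $\mathcal{T}=\langle t_k \mid [t_k,t_{k+1}]=1\rangle$ is not abelian, but consecutive generators commute), the plan is to verify directly that conjugating each generator $t_{j}^{-1}t_{j-1}^{r+1}$ of $H \cap \mathcal{T}$ by each generator $t_k$ of $\mathcal{T}$ lands back inside $N$. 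Because $C$ is normal, the factor $C$ causes no trouble under conjugation; the real content is understanding how conjugation by $t_k$ moves the generators of $H \cap \mathcal{T}$, and checking that the resulting element differs from an element of $H\cap\mathcal{T}$ only by something in $C$.

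First I would exploit the structure of $\mathcal{T}$. Since only consecutive generators commute, conjugation of $t_{j}^{-1}t_{j-1}^{r+1}$ by $t_k$ is trivial whenever $k$ is far (distance $\geq 2$) from both $j$ and $j-1$, so I only need to handle the finitely many ``nearby'' cases $k \in \{j-2, j-1, j, j+1\}$. In those cases conjugation by $t_k$ produces a word that is a product of the original generators together with commutator-type corrections. The heart of the argument is to show these correction terms lie in $C = \text{core}(L)\cap\mathcal{T}$. Here I expect to use that $C$ is finite index and normal in $\mathcal{T}$: the quotient $\mathcal{T}/C$ is a finite group, so each $t_k$ has finite order modulo $C$, say order dividing some $n$. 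This finiteness should force the offending correction terms, which can be expressed via high powers or commutators of the $t_k$ modulo $C$, to collapse into $C$. I would work entirely in $\mathcal{T}/C$: since $N/C$ is the image of $H\cap\mathcal{T}$ in the finite group $\mathcal{T}/C$, it suffices to show this image is normal in $\mathcal{T}/C$.

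Concretely, the cleanest route is to pass to the finite quotient $\overline{\mathcal{T}} := \mathcal{T}/C$ and let $\overline{H}$ be the image of $H\cap\mathcal{T}$; then $N/C = \overline{H}$ and normality of $N$ in $\mathcal{T}$ is equivalent to normality of $\overline{H}$ in $\overline{\mathcal{T}}$. Because $\overline{\mathcal{T}}$ is generated by the images $\overline{t_k}$, and these satisfy $\overline{t_k}^{\,n}=1$ for some common $n$ (finiteness) plus the commuting-neighbors relations, the images $\overline{t_k}$ become periodic in $k$: there is a period $d$ with $\overline{t_{k+d}}=\overline{t_k}$ (this follows because a finite group is generated by finitely many of the $t_k$, and the shift automorphism induced by conjugation by $a_2$ descends appropriately). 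The generators $\overline{t_j}^{-1}\overline{t_{j-1}}^{r+1}$ of $\overline{H}$ are likewise periodic, so $\overline{H}$ is a finitely generated subgroup invariant under the shift. I would then check conjugation-invariance generator-by-generator using the relations in $\overline{\mathcal{T}}$, reducing to a finite computation.

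The main obstacle will be the third step: controlling the correction terms that appear when conjugating $t_j^{-1}t_{j-1}^{r+1}$ by a neighboring $t_k$ and proving they fall into $C$. This requires carefully tracking commutators $[t_k, t_{j-1}]$, $[t_k,t_j]$ and their powers in the nilpotent-like group $\mathcal{T}$, and then arguing that finiteness of $\mathcal{T}/C$ kills them. I anticipate the cleanest argument is to observe that $\mathcal{T}$ modulo its (normal, finite-index) subgroup $C$ has bounded exponent on each generator, so any commutator of generators, raised to the order of the quotient, lies in $C$; combined with the explicit shift-periodicity this should close the gap. I would present this as: the image of $H \cap \mathcal{T}$ in $\mathcal{T}/C$ is a shift-invariant subgroup of a finite quotient, and shift-invariance together with the explicit relations forces normality.
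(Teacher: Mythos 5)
There is a genuine gap, and it starts at the very first step. In $\mathcal{T}=\langle t_k \mid [t_k,t_{k+1}]=1,\ k\in\mathbb{Z}\rangle$ it is the \emph{consecutive} generators that commute; generators at distance $\geq 2$ satisfy no relation and generate a free subgroup. So your claim that conjugation of $t_j^{-1}t_{j-1}^{r+1}$ by $t_k$ is trivial whenever $k$ is far (distance $\geq 2$) from $j$ and $j-1$ is exactly backwards: conjugation is trivial only for $k\in\{j-1,j\}$ and is nontrivial for the infinitely many remaining $k$. The reduction to a finite list of ``nearby'' cases therefore collapses, and with it the plan of a finite generator-by-generator check.

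The deeper gap is that even after passing to $\overline{\mathcal{T}}=\mathcal{T}/C$ --- which is a legitimate reformulation, since $N$ is normal in $\mathcal{T}$ if and only if the image of $H\cap\mathcal{T}$ is normal in $\mathcal{T}/C$ --- you never supply the mechanism that forces conjugates back into $N$. ``Bounded exponent kills the correction terms'' is not a valid inference: in a finite quotient, commutators of generators need not lie in $C$, and $x^n\in C$ does not give $x\in C$. Likewise, shift-invariance (invariance under conjugation by $a_2$) of $N$ is immediate from the definitions and does not yield normality under the $t_k$. The paper's proof rests on two specific ingredients you would need. First, the telescoping identity
$$t_0^{-1}t_i^{(r+1)^{i}}= t_0^{-1}t_{-1}^{(r+1)}\,(t_{-1}^{-1}t_{-2}^{r+1})^{(r+1)} \cdots (t_{i+1}^{-1}t_i^{r+1})^{(r+1)^{i-1}} \in H\cap\mathcal{T} \quad (i<0),$$
which uses the commuting-neighbors relation and the exact form of the generators $t_i^{-1}t_{i-1}^{r+1}$, and which shows that conjugating the negative-index generators by $t_0^{\pm1}$ lands back in $H\cap\mathcal{T}$ itself. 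Second, a pigeonhole argument producing $t_nt_0^{-1}\in \mathrm{core}(L)\cap\mathcal{T}$ for arbitrarily large $n>0$, used to translate the positive-index generators to negative-index ones modulo $\mathrm{core}(L)\cap\mathcal{T}$. Your periodicity observation is essentially this second ingredient, but you deploy it only to assert that a ``finite computation'' would finish the proof; that computation cannot be carried out without the first ingredient, which is precisely the content your proposal is missing.
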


	Notice that $H \cap \mathcal{T}$ and $\text{core}(L)\cap \mathcal{T}$ are invariant under conjugation by $a_2$. Therefore $N$ is invariant under conjugation by $a_2$. We will next establish that $(H\cap\mathcal{T})^{t_0^{\pm 1}}\subset N$ by considering where conjugation by $t_0 ^{\pm 1}$ sends the generators $t_i^{-1}t_{i-1}^{r+1}$.

\begin{lemma}\label{more} If $i <0$, then $(t_i^{-1}t_{i-1}^{r+1})^{t_0^{\pm 1}} \in H \cap \mathcal{T}$.
\end{lemma}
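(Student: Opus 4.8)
The plan is to prove the stronger assertion that conjugation by $t_0^{\pm 1}$ sends each generator $\gamma_i := t_i^{-1}t_{i-1}^{r+1}$ of $H\cap\mathcal{T}$ with $i<0$ back into $H\cap\mathcal{T}=\langle \gamma_j \mid j\in\mathbb{Z}\rangle$, using the description of $H\cap\mathcal{T}$ established above. The engine is the single relation obtained by solving $\gamma_m = t_m^{-1}t_{m-1}^{r+1}$ for $t_m$, namely $t_m = t_{m-1}^{r+1}\gamma_m^{-1}$. Substituting this into $x^{t_m}=t_m^{-1}xt_m$ gives, for every $x\in\mathcal{T}$, the operator identity
$$x^{t_m}=\gamma_m\, x^{t_{m-1}^{r+1}}\,\gamma_m^{-1}.$$
In words, conjugating by $t_m$ has the same effect as conjugating by $t_{m-1}^{r+1}$ followed by conjugating by $\gamma_m^{-1}$. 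Since $t_{m-1}$ commutes with both $t_{m-1}$ and $t_m$, it commutes with $\gamma_m$, so these two operations commute; raising to the $n$th power therefore yields
$$x^{t_m^{\,n}}=\gamma_m^{\,n}\, x^{t_{m-1}^{\,n(r+1)}}\,\gamma_m^{-n}$$
for every $n\in\mathbb{Z}$. This is the crucial point: conjugation by a power of $t_m$ is converted into conjugation by a larger power of $t_{m-1}$, at the cost of a conjugation by $\gamma_m^{\,n}\in H\cap\mathcal{T}$, which is harmless.

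With this identity in hand I would prove, by induction on $m-i\ge 0$, the claim that $\gamma_i^{\,t_m^{\,n}}\in H\cap\mathcal{T}$ for all $n\in\mathbb{Z}$. The base case $m=i$ is immediate: $t_i$ commutes with $t_i$ and with $t_{i-1}$, hence with $\gamma_i$, so $\gamma_i^{\,t_i^{\,n}}=\gamma_i$. For the inductive step, apply the displayed identity with $x=\gamma_i$ to write $\gamma_i^{\,t_m^{\,n}}=\gamma_m^{\,n}\,\gamma_i^{\,t_{m-1}^{\,n(r+1)}}\,\gamma_m^{-n}$; the inner factor lies in $H\cap\mathcal{T}$ by the inductive hypothesis (the gap $(m-1)-i$ has dropped by one), and $\gamma_m\in H\cap\mathcal{T}$, so the whole expression does too. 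Taking $m=0$ and $n=\pm 1$ then gives exactly $(t_i^{-1}t_{i-1}^{r+1})^{t_0^{\pm 1}}\in H\cap\mathcal{T}$ for every $i<0$, which is the assertion of the lemma; the $t_0^{-1}$ case is simply the $n=-1$ instance and needs no separate treatment.

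The step that requires the most care — and the reason the inductive statement must be phrased for arbitrary exponents $n$ rather than only $n=\pm 1$ — is the exponent blow-up: peeling off one index replaces $t_m^{\,n}$ by $t_{m-1}^{\,n(r+1)}$, so at the next stage one is no longer conjugating by a first power. Strengthening the hypothesis to all powers absorbs this, and the induction closes after exactly $-i$ steps. The only facts about $\mathcal{T}$ used beyond its presentation are the two commutations $[t_{m-1},\gamma_m]=1$ and $[t_i,\gamma_i]=1$, both immediate from $[t_k,t_{k+1}]=1$; notably, nothing is needed about \emph{non}-adjacent generators, whose failure to commute is precisely what gets repackaged into the powers of $\gamma_m$ along the way.
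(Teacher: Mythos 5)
Your proof is correct, and it is essentially the paper's argument repackaged as an induction: unwinding your recursion from $m=0$ down to $m=i$, the accumulated conjugator $\gamma_0\,\gamma_{-1}^{\,r+1}\cdots\gamma_{i+1}^{\,(r+1)^{-i-1}}$ is precisely the element $t_0^{-1}t_i^{(r+1)^{|i|}}$ of $H\cap\mathcal{T}$ that the paper exhibits explicitly as a telescoping product and then conjugates $t_i^{-1}t_{i-1}^{r+1}$ by. Both arguments rest on the same two observations --- that $t_m$ and $t_{m-1}^{r+1}$ differ by a generator of $H\cap\mathcal{T}$, and that adjacent generators of $\mathcal{T}$ commute --- so your strengthened inductive statement over all powers $n$ is just a different bookkeeping device for the same computation.
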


\begin{proof} For $i<0$, both $t_0^{-1}t_i^{(r+1)^{i}} \in H \cap \mathcal{T}$ and $t_i^{(r+1)^{i}}t_0^{-1} \in H \cap \mathcal{T}$, as $$t_0^{-1}t_i^{(r+1)^{i}}= t_0^{-1}t_{-1}^{(r+1)}(t_{-1}^{-(r+1)}t_{-2}^{(r+1)^{2}}) \cdots (t_{i+1}^{-(r+1)^{i-1}}t_i^{(r+1)^i}) = t_0^{-1}t_{-1}^{(r+1)}(t_{-1}^{-1}t_{-2}^{(r+1)})^{(r+1)} \cdots (t_{i+1}^{-1}t_i^{r+1})^{(r+1)^{i-1}},$$ where the second equality holds since $[t_k, t_{k+1}]=1$ for all $k$. The same kind of rewriting shows $t_i^{(r+1)^{i}}t_0^{-1} \in H \cap \mathcal{T}$. \\

Next $(t_i^{-1}t_{i-1}^{r+1})^{t_0}$ can be rewritten using words of the form $t_0^{-1}t_i^{(r+1)^{i}}$:
$$t_0^{-1}t_i^{-1}t_{i-1}^{r+1}t_0 = t_0^{-1}t_i^{(r+1)^{i}}(t_i^{-(r+1)^{i}}t_i^{-1}t_{i-1}^{r+1}t_i^{(r+1)^{i}})t_i^{-(r+1)^{i}}t_0=(t_0^{-1}t_i^{(r+1)^{i}})(t_i^{-1}t_{i-1}^{r+1})(t_0^{-1}t_i^{(r+1)^{i}})^{-1}.$$
Since $t_i^{-1}t_{i-1}^{r+1}$ and $t_0^{-1}t_i^{(r+1)^{i}}$ are in $H \cap \mathcal{T}$, so is $(t_i^{-1}t_{i-1}^{r+1})^{t_0} \in H \cap \mathcal{T}$. Similarly, $$t_0t_i^{-1}t_{i-1}^{r+1}t_0^{-1}= (t_i^{{(r+1)}^i}t_0^{-1})^{-1}(t_i^{-1}t_{i-1}^{r+1})(t_i^{{(r+1)}^i}t_0^{-1})^{-1} \in H \cap \mathcal{T}.$$
 \end{proof}

For the other half of the generators, we can only show the following weaker lemma:
\begin{lemma}\label{less} If $l>0$, then $(t_{l}^{-1}t_{l-1}^{r+1})^{t_0^{\pm 1}} \in (H \cap \mathcal{T})(\text{\textup{core}}(L) \cap \mathcal{T})$.
\end{lemma}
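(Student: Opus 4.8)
The plan is to leverage the $a_2$-invariance of $M := \text{core}(L) \cap \mathcal{T}$ together with Lemma \ref{more}, by transporting the positive-index generators into negative-index position inside the finite quotient $Q := \mathcal{T}/M$. The point of needing $N$ rather than $H\cap\mathcal{T}$ is the asymmetry already visible in Lemma \ref{more}: there the telescoping works because the ``leading'' generator $t_0^{-1}t_{-1}^{r+1}$ carries $t_0$ to the first power, whereas going in the positive direction the first relevant generator is $t_1^{-1}t_0^{r+1}$, which only produces $t_0^{r+1}$; one cannot isolate $t_0$, so for $l\ge 2$ the conjugate $(t_l^{-1}t_{l-1}^{r+1})^{t_0^{\pm1}}$ need not lie in $H\cap\mathcal{T}$ on the nose. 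Write $g_i := t_i^{-1}t_{i-1}^{r+1}$ for the generators of $H\cap\mathcal{T}$, and let $\overline{(\cdot)}$ denote the image in $Q$.

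First I would record the structure of $Q$. Since $\text{core}(L)$ is normal in $G_2$ and $\mathcal{T}$ is normal in $G_2$, the subgroup $M$ is normal, $a_2$-invariant, and of finite index in $\mathcal{T}$. Hence conjugation by $a_2$ descends to an automorphism $\overline{\sigma}$ of the \emph{finite} group $Q$ satisfying $\overline{\sigma}(\overline{t_k}) = \overline{t_{k+1}}$. Because $\mathrm{Aut}(Q)$ is finite, $\overline{\sigma}$ has finite order, say $d$, so that $\overline{\sigma}^{\,d} = \mathrm{id}_Q$ and therefore $\overline{t_{k+d}} = \overline{t_k}$ for every $k \in \mathbb{Z}$. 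Thus the images of the generators are periodic with period $d$, which is the single place where finiteness of $Q$ is genuinely used.

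Next I would reduce the claim to a statement in $Q$. Since $N = (H\cap\mathcal{T})M$ and $M = \ker(\mathcal{T} \to Q)$, and since $g_l^{t_0^{\pm1}} \in \mathcal{T}$, membership $g_l^{t_0^{\pm1}} \in N$ is equivalent to $\overline{g_l^{t_0^{\pm1}}} \in \overline{H\cap\mathcal{T}}$. Fixing $l>0$, choose an integer $m<0$ of large enough magnitude that $j := l+md < 0$. Periodicity gives $\overline{t_j} = \overline{t_l}$ and $\overline{t_{j-1}} = \overline{t_{l-1}}$, so $\overline{g_j} = \overline{t_j^{-1}t_{j-1}^{r+1}} = \overline{t_l^{-1}t_{l-1}^{r+1}} = \overline{g_l}$, and consequently $\overline{g_l^{t_0^{\pm1}}} = \overline{t_0}^{\,\mp1}\,\overline{g_l}\,\overline{t_0}^{\,\pm1} = \overline{t_0}^{\,\mp1}\,\overline{g_j}\,\overline{t_0}^{\,\pm1} = \overline{g_j^{t_0^{\pm1}}}$.

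Finally, since $j<0$, Lemma \ref{more} yields $g_j^{t_0^{\pm1}} \in H\cap\mathcal{T}$, whence $\overline{g_l^{t_0^{\pm1}}} = \overline{g_j^{t_0^{\pm1}}} \in \overline{H\cap\mathcal{T}}$, which is exactly the desired conclusion $g_l^{t_0^{\pm1}} \in (H\cap\mathcal{T})(\text{core}(L)\cap\mathcal{T})$. I expect the only real obstacle to be the first step, namely producing the periodicity $\overline{t_{k+d}} = \overline{t_k}$; everything afterward is a bookkeeping reduction to the already-established negative-index case. This also explains cleanly why the conclusion must be weakened from $H\cap\mathcal{T}$ to $N$: the transport argument only controls images in the finite quotient, and the positive generators are not individually conjugated back into $H\cap\mathcal{T}$ by $t_0^{\pm1}$.
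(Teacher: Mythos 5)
Your proof is correct and takes essentially the same route as the paper: both arguments use the finite index of $\mathrm{core}(L)\cap\mathcal{T}$ to show that the generators $t_k$ become periodic modulo $\mathrm{core}(L)\cap\mathcal{T}$, and then shift $t_l^{-1}t_{l-1}^{r+1}$ to a negative-index generator so that Lemma \ref{more} applies. The only difference is cosmetic: the paper extracts the periodicity by pigeonhole on cosets (producing explicit elements $t_nt_0^{-1}\in\mathrm{core}(L)\cap\mathcal{T}$ and an explicit factorization of $t_0t_l^{-1}t_{l-1}^{r+1}t_0^{-1}$ as a product of such elements with an $a_2$-conjugate of a negative-index case), whereas you pass to the finite quotient $\mathcal{T}/(\mathrm{core}(L)\cap\mathcal{T})$ and use that the induced shift automorphism has finite order.
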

\begin{proof}
Because $\text{core}(L) \cap \mathcal{T}$ is finite index in $\mathcal{T}$, there exists $t_it_j^{-1} \in \text{core}(L) \cap \mathcal{T}$ with $i-j< 0$. Indeed there are infinitely many generators and only finitely many cosets of $\text{core}(L) \cap \mathcal{T}$. Since $\text{core}(L) \cap \mathcal{T}$ is normal,
 ${t_{i-j}t_0^{-1}= (t_it_j^{-1})^{a_2^{-j}} \in \text{core}(L) \cap \mathcal{T}}$. By conjugating $t_{i-j}t_0^{-1}$ by $a_2^{(i-j)k}$ we get ${t_{(i-j)(k+1)}t_{(i-j)k}^{-1} \in \text{core}(L) \cap \mathcal{T}}$ for all $k \in \mathbb{Z}$. Stringing these elements together, we get that $t_{(i-j)k}t_0^{-1} \in \text{core}(L) \cap \mathcal{T}$ for all $k \in \mathbb{Z}$.

Given $l>0$, choose $n=(i-j)k$ such that $n>l$. Then
\begin{align*} t_0t_l^{-1}t_{l-1}^{r+1}t_0^{-1}= (t_0t_n^{-1})(t_n t_l^{-1}t_{l-1}^{r+1}t_n^{-1})(t_n t_0^{-1})= (t_0t_n^{-1})(t_0t_{l-n}^{-1}t_{l-1-n}^{r+1}t_0^{-1})^{a_2^n}(t_n t_0^{-1})
\end{align*} 

Lemma \ref{more} implies that the middle term is an element of $H\cap \mathcal{T}$, as $l-n<0$, and $H\cap \mathcal{T}$ is invariant under conjugation by $a_2$. The conjugating terms $t_n t_0^{-1} \in \text{core}(L) \cap \mathcal{T}$ and so $t_0t_l^{-1}t_{l-1}^{r+1}t_0^{-1} \in (H \cap \mathcal{T})(\text{core}(L) \cap \mathcal{T})$.\end{proof}

 From Lemmas \ref{more} and \ref{less}, we have that each of the generators of $H \cap \mathcal{T}$ is conjugated by $t_0$ and $t_0^{-1}$ into $N=(H \cap \mathcal{T})(\textup{core}(L) \cap \mathcal{T})$. From the normality of $\text{\textup{core}}(L) \cap \mathcal{T}$, we get $((H \cap \mathcal{T})(\text{\textup{core}}(L) \cap \mathcal{T}))^{t_0^{\pm 1}} = N$, and we can conjugate $N^{t_0^{\pm 1}} \subset N$ by $a_2^k$ for $k \in \mathbb{Z}$ to get $N^{t_k^{\pm 1}}\subset N$. Therefore $N$ is a normal subgroup of $\mathcal{T}$.\\

That $[t_0^{-1}, t_2^{-1}]$ is in $N$ follows easily from $N$ being normal in $\mathcal{T}$. Indeed, since $t_0^{-1}t_i^{(r+1)^i} \in H \cap \mathcal{T}$ for $i<0$, it follows that $t_i^{(r+1)^i}N = t_0 N$. In the quotient $\mathcal{T}/ N$, the images of $t_i$ and $t_0$ commute when $i<0$. When $i>0$, we can rewrite $[t_0, t_i]= [t_{-i}, t_0]^{a_2^{i}}= 1^{a_2^{i}}=1$, so they too commute in the quotient. Therefore $\mathcal{T}/ N$ is an abelian group and $[t_0^{-1}, t_2^{-1}] \in N$. Since $H$ and $\text{core}(L)$ are subgroups of $L$ and $N=(H \cap \mathcal{T})(\text{core}(L) \cap \mathcal{T})$, it follows that $N \leq L$. Therefore $[t_0^{-1}, t_2^{-1}]$ is an element of $L$ but not of $H_2(r,0)$, and so $H_2(r,0)$ is not a separable subgroup of $G_2$.\\
\end{proof}

Consider the group $G_k(\mathbf{w}) = \langle a_1, \dots, a_k, t~|~a_1^t=a_1w_1, \dots, a_k^t = a_kw_k \rangle$, where $\mathbf{w} = (w_1, \dots, w_k)$, with each $w_i$ a positive word on the generators $\{ a_1 \dots a_{i-1}\}$. Recall the statement of Theorem \ref{genHydra}: The subgroup $H_k=H_k(1,\dots,1)$ is separable in $G_k(\mathbf{w})$ if and only if $\mathbf{w}=(1,\dots, 1)$.\\

\begin{proof}[Proof of Theorem \ref{genHydra}] If $\mathbf{w}=(1, \dots, 1)$, then $G_k= F_k \times \langle t \rangle$. $G_k$ is subgroup separable, so in particular, $H_k$ is separable. If $\mathbf{w}\neq(1, \dots, 1)$, then $G_k(\mathbf{w})$ has $\mathbf{w}$ with initial segment of the form $(1,\dots, 1, w_c,\dots, w_k)$ where $w_c$ is the first non-trivial word. The subgroup $\langle w_c, a_c, t  \rangle$ is isomorphic to $G_2$ and the subgroup $\langle w_ct^{|w_c|},a_ct \rangle$ is isomorphic to the subgroup $H_2(|w_c|,1)$, where $|w_c|$ is the length of $w_c$ in $\langle a_1, \dots, a_{c-1} \rangle$. Proposition~\ref{BK} implies that this subgroup is not separable in $\langle w_c, a_c, t \rangle$. Since $\langle w_ct^{|w_c|},a_ct \rangle$ is the intersection $H_k(\mathbf{w}) \cap \langle w_c, a_c, t \rangle$, Lemma~\ref{simp} implies that $H_k$ is not separable in $G_k(\mathbf{w})$.\end{proof}

The most general form for which the methods of Dison and Riley apply are $H_k(\mathbf{r})\leq G_k(\mathbf{w})$. We are able to get only a partial characterization of separability in this case, which is a generalization of Theorem \ref{genHydra} and its proof.\\

Recall the statement of Theorem \ref{mostgenHydra}: Suppose that $\mathbf{w}= (1, \dots, 1, w_c, \dots, w_k)$ and $\mathbf{r}= (r_1, \dots, r_k)$, with conditions on $\mathbf{w}$, $\mathbf{r}$ as above. Let $[w_c]_i$ denote the index sum of $a_i$ in $w_c$. If $\sum_{i=1}^{c-1}[w_c]_i r_i \neq 0$, then $H_k(\mathbf{r})$ is not a separable subgroup of $G_k(\mathbf{w})$.\\

\begin{proof}[Proof of Theorem \ref{mostgenHydra}] We examine the subgroup $\langle a_c, w_c, t~|~w_c^t=w_c, a_c^t=a_cw \rangle$, which is isomorphic to $G_2$. The subgroup given by 
$$\langle w_ct^{\sum_{i=1}^{c-1}[w_c]_ir_i}, a_ct\rangle$$ is isomorphic to $H_2(\sum_{i=1}^c[w_c]_i r_i, 1)$, and by Lemma~\ref{BK}, this subgroup is not separable if $\sum_{i=1}^c[w_c]_i r_i \neq 0$.\end{proof}

\begin{remark}
Separability of $H_k$ in $G_k$ for the case that $\sum_{i=1}^c[w_c]_i r_i =0$ is not established by the argument above. The most basic examples for which our method fails are those of the form $$G_{c+1}(\mathbf{w})= \langle a_1, \dots, a_c, a_{c+1}, t~|~ a_i^t=a_i, i<c, a_{c}^t=a_cw_c, a_{c+1}^t=a_{c+1}w_{c+1} \rangle$$ and $$H_{c+1}(\mathbf{r})=\langle a_1, \dots, a_{c-1}, a_ct^{r_c}, a_{c+1} \rangle.$$ The simplest case of this failure is the group $G_3 = \langle a_1, a_2, t~|~a_1^t=a_1, a_2^t=a_2a_1, a_3^t=a_3a_2\rangle$ with subgroup $H_3(\mathbf{r})=\langle a_1, a_2t, a_3\rangle.$
\end{remark}
\end{section}
\bibliography{references}{}
\bibliographystyle{plain}
\end{document}